\documentclass[10pt,reqno]{amsart} 
\usepackage[a4paper, margin=2.3cm]{geometry}





\usepackage{amsthm}

\usepackage{bbm}
\usepackage{mathrsfs}
\usepackage{graphicx}

\usepackage[utf8]{inputenc}
\usepackage{xypic}

\usepackage{multirow}

\usepackage{enumitem}

\usepackage{float}
\usepackage{amsbsy}
\usepackage[all]{xy}\usepackage{xypic}
\usepackage{braket}
\usepackage[colorlinks = true,citecolor=black]{hyperref}
\usepackage{dynkin-diagrams}

\hypersetup{
     colorlinks=true,
     linkcolor=blue,
     filecolor=blue,
     citecolor = blue,      
     urlcolor=cyan,
     }

\newtheorem{theorem}{Theorem}[section]
\newtheorem*{theorem*}{Theorem}
\newtheorem{theorem-non}{Theorem}
\newtheorem{lemma-non}{Lemma}

\theoremstyle{definition} 
\newtheorem{thm}{Theorem}

\theoremstyle{definition} 
\newtheorem{corollarynon}{Corollary}

\newtheorem{conjecture-non}{Conjecture}

\newtheorem{corollary-non}{Corollary}
\newtheorem{proposition}[theorem]{Proposition}

\newtheorem*{lemma*}{Lemma}
\newtheorem{corollary}[theorem]{Corollary}

\newtheorem*{conjecture*}{Conjecture}

\theoremstyle{definition}

\theoremstyle{remark}
\newtheorem{remark}[theorem]{Remark}

\DeclareMathOperator{\Diag}{Diag}

\DeclareMathOperator{\rank}{rank}
\DeclareMathOperator{\htt}{ht}
\DeclareMathOperator{\Spin}{Spin}

\numberwithin{equation}{section}

\setcounter{tocdepth}{4}
\setcounter{secnumdepth}{4}




\title{Twisted K\"ahler-Einstein metrics on flag varieties}
\author{Eder M. Correa}
\author{Lino Grama}
\address{University of Campinas (UNICAMP), Institute of Mathematics, Statistics and Scientific Computing (IMECC), Campinas, Brazil}
\email{ederc@unicamp.br, lgrama@unicamp.br}
\date{\today}

\begin{document}
\begin{abstract}
    In this paper, we describe invariant twisted K\"ahler-Einstein (tKE) metrics on flag varieties. We also explore some applications of the ideas involved in the proof of our main result to the existence of invariant twisted constant scalar curvature K\"{a}hler metrics. Also, we provide a precise description for the greatest Ricci lower bound of an arbitrary K\"{a}hler class on a flag variety. By means of this description, we establish some inequalities related to optimal volume upper bounds for K\"{a}hler metrics just using tools from Lie theory. Further, we describe the set of tKE metrics for several examples, including full flag varieties, the projectivization of the tangent bundle of $\mathbbm{P}^{n+1}$, and families of flag varieties with Picard number $2$. 
\end{abstract}
\maketitle
\section{Introduction}

Let $X$ be a compact K\"ahler manifold and let $\xi$ be a K\"ahler class on $X$. Fixed a smooth $(1,1)$-form $\beta \in 2\pi  \big (c_1(X) - \xi \big )$, the solutions of the equation 
\begin{equation}\label{TKE-equation}
    {\rm{Ric}}(\omega)=\omega + \beta, 
\end{equation}
are called {\em twisted K\"ahler-Einstein} (tKE) metrics. As in the usual Fano case ($\xi = c_{1}(X)$ and $\beta = 0)$, we have that Eq. (\ref{TKE-equation}) is not always solvable. In the particular case that $\xi = c_{1}(L)$ and $\beta > 0$, the existence of solutions of Eq. (\ref{TKE-equation}) was characterized in \cite{BBSJ} using a twisted analogue of the $\delta$-{\em invariant} originally defined in \cite{FujitaOdaka}. As shown in \cite{BLZ}, \cite{BBSJ}, \cite{BLZ2}, the $\delta$-invariant is the right threshold to detect Ding-stability, an algebraic notion designed for the existence of tKE metrics. However, $\delta$-invariant is not easy to compute in general, see for instance \cite{BlumJonsson}. Further, tKE metrics also appear, for instance, in the study of K\"ahler-Ricci flow through singularities, see \cite{song-tian}, \cite{song-tian2}. There is also a twisted version of the K\"{a}hler-Ricci flow which was studied in \cite{JLiu}, \cite{JLiuYWang}, \cite{collins1}. In this paper, we restrict ourselves to the study of Eq. (\ref{TKE-equation}) in the setting of generalized flag varieties. A flag variety can be described as a quotient $X_{P} = G^{\mathbbm{C}}/P$, where $G^{\mathbbm{C}}$ is a semisimple complex algebraic group and $P$ is a parabolic subgroup (Borel-Remmert \cite{BorelRemmert}). Regarding $G^{\mathbbm{C}}$ as a complex analytic space, without loss of generality, we may assume that $G^{\mathbbm{C}}$ is a connected simply connected complex simple Lie group. Fixed a compact real form $G \subset G^{\mathbbm{C}}$, the main purpose in this work is to characterize the existence of $G$-invariant tKE metrics on flag varieties using essentially tools of Lie theory. As in the case of $G$-invariant K\"{a}hler-Einstein metrics, the main philosophy is to reduce the underlying non-linear PDE problem provided by Eq. (\ref{TKE-equation}) to an algebraic problem involving elements of the theory of semisimple Lie groups and Lie algebras. The advantage of this approach is that it allows us to describe $G$-invariant tKE metrics in a precise and explicit way. We also explore some applications of the ideas involved in the proof of our main result to the existence of certain invariant twisted constant scalar curvature K\"{a}hler metrics. Further, we provide a precise description for the {\textit{greatest Ricci lower bound}} of every K\"{a}hler class on a flag variety. By means of this description, we establish some inequalities related to optimal volume upper bounds for K\"{a}hler metrics just using tools from Lie theory. Additionally, we describe the set of tKE metrics for several examples, including full flag varieties, the projectivization of the tangent bundle of $\mathbbm{P}^{n+1}$, and families of flag varieties with Picard number $2$.

\subsection{Main results} Let $X_{P} = G^{\mathbbm{C}}/P$ be a complex flag variety. Considering ${\text{Lie}}(G^{\mathbbm{C}}) = \mathfrak{g}^{\mathbbm{C}}$, if we choose a Cartan subalgebra $\mathfrak{h} \subset \mathfrak{g}^{\mathbbm{C}}$, and a simple root system $\Sigma \subset \mathfrak{h}^{\ast}$, up to conjugation, we have that $P = P_{\Theta}$, for some $\Theta \subset \Sigma$, where $P_{\Theta}$ is a parabolic Lie subgroup determined by $\Theta$, see for instance \cite{Akhiezer}. The main result which we prove in this work is the following 
\begin{thm}
Let $L \in {\rm{Pic}}(X_{P})$ and let $\beta \in c_{1}(L)$ be a $G$-invariant $(1,1)$-form. Then there exists a (unique) $G$-invariant K\"{a}hler metric $\omega$ on $X_{P}$, satisfying 
\begin{equation}
{\rm{Ric}}(\omega) = \omega + \beta,
\end{equation}
if, and only if,
\begin{equation}
\int_{\mathbbm{P}_{\alpha}^{1}}\beta < 2\pi\langle \delta_{P},h_{\alpha}^{\vee}  \rangle,
\end{equation}
$\forall \alpha \in \Sigma \backslash \Theta$, where $\mathbbm{P}_{\alpha}^{1} \subset X_{P}$, $\alpha \in \Sigma \backslash \Theta$, are generators of the cone of curves ${\rm{NE}}(X_{P})$. 
\end{thm}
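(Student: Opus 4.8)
\medskip

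\noindent\emph{Plan of proof.}
The strategy is to exploit the rigidity of $G$-invariant tensors on the homogeneous space $X_{P}=G/K$ (with $K\ce G\cap P$) in order to collapse the nonlinear equation ${\rm{Ric}}(\omega)=\omega+\beta$ to a cohomological statement, which in turn becomes the displayed Lie-theoretic inequality. From the structure theory of $G$-invariant Kähler geometry on flag varieties (recalled in the preliminaries) I would use the following facts: the $G$-invariant closed real $(1,1)$-forms on $X_{P}$ are precisely the elements of the real span of a distinguished basis $\{\varpi_{\alpha}:\alpha\in\Sigma\backslash\Theta\}$ of $H^{1,1}(X_{P},\mathbbm{R})=H^{2}(X_{P},\mathbbm{R})$; the curves $\mathbbm{P}^{1}_{\alpha}$ generate ${\rm{NE}}(X_{P})$ and are, up to normalization, dual to this basis; and $c_{1}(X_{P})$ is represented by the $G$-invariant form attached to $\delta_{P}$, with $\int_{\mathbbm{P}^{1}_{\alpha}}c_{1}(X_{P})=2\pi\langle\delta_{P},h_{\alpha}^{\vee}\rangle>0$ for all $\alpha\in\Sigma\backslash\Theta$.

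The one analytic ingredient is a uniqueness lemma: \emph{a $G$-invariant closed real $(1,1)$-form on $X_{P}$ is determined by its de Rham class.} Indeed, if $\eta_{1},\eta_{2}$ are two such forms in the same class, the $\partial\bar{\partial}$-lemma gives $\eta_{1}-\eta_{2}=i\partial\bar{\partial}f$ with $f$ real; averaging $f$ over $G$ (which commutes with $i\partial\bar{\partial}$ since $\eta_{1}-\eta_{2}$ is $G$-invariant) lets us take $f$ to be $G$-invariant, hence constant by transitivity of the $G$-action, so $\eta_{1}=\eta_{2}$. The same averaging shows that every class in $H^{1,1}(X_{P},\mathbbm{R})$ has a unique $G$-invariant representative and that such a class is Kähler as soon as this representative is positive; combined with the duality above, a class $\xi$ is Kähler if and only if $\int_{\mathbbm{P}^{1}_{\alpha}}\xi>0$ for every $\alpha\in\Sigma\backslash\Theta$.

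Granting this, the proof splits as follows. For necessity, if $\omega$ is a $G$-invariant Kähler metric with ${\rm{Ric}}(\omega)=\omega+\beta$, then passing to classes and using $[{\rm{Ric}}(\omega)]=c_{1}(X_{P})$ forces $[\omega]=c_{1}(X_{P})-c_{1}(L)$; since $\omega$ is Kähler, pairing with $[\mathbbm{P}^{1}_{\alpha}]$ gives $0<\int_{\mathbbm{P}^{1}_{\alpha}}\omega=2\pi\langle\delta_{P},h_{\alpha}^{\vee}\rangle-\int_{\mathbbm{P}^{1}_{\alpha}}\beta$, which is the asserted inequality. Conversely, if the inequalities hold, then $c_{1}(X_{P})-c_{1}(L)$ is positive on every generator of ${\rm{NE}}(X_{P})$, hence (Kleiman, or directly the description of the $G$-invariant Kähler cone) it is a Kähler class; let $\omega$ be its unique $G$-invariant Kähler representative. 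Then ${\rm{Ric}}(\omega)$ and $\omega+\beta$ are $G$-invariant closed real $(1,1)$-forms lying in the same class, namely $[\omega]+c_{1}(L)=c_{1}(X_{P})=[{\rm{Ric}}(\omega)]$, so by the uniqueness lemma they coincide and $\omega$ solves the equation. Uniqueness of the solution is then immediate: its class is forced to be $c_{1}(X_{P})-c_{1}(L)$, and the $G$-invariant Kähler representative of a class is unique. I expect the main point requiring care to be bookkeeping rather than analysis—the PDE dissolves thanks to homogeneity—namely pinning down the normalizations so that $\int_{\mathbbm{P}^{1}_{\alpha}}c_{1}(X_{P})=2\pi\langle\delta_{P},h_{\alpha}^{\vee}\rangle$ and that the $\mathbbm{P}^{1}_{\alpha}$ genuinely generate ${\rm{NE}}(X_{P})$, together with the precise meaning of $\delta_{P}$ as the relevant sum of roots attached to $\Sigma\backslash\Theta$; these I would establish in the preliminary sections.
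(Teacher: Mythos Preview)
Your proposal is correct and follows essentially the same route as the paper: reduce the equation to cohomology via the fact that $G$-invariant closed real $(1,1)$-forms are uniquely determined by their class, then read off the inequalities by pairing $[\omega]=2\pi c_{1}(X_{P})-c_{1}(L)$ against the generating curves $\mathbbm{P}^{1}_{\alpha}$. The paper phrases the converse by writing $\omega$ explicitly in the basis $\{\Omega_{\alpha}\}$ and saying ``it is straightforward to verify'' the equation, which unpacks to exactly your uniqueness argument; the only point to tidy is your normalization $\int_{\mathbbm{P}^{1}_{\alpha}}c_{1}(X_{P})=2\pi\langle\delta_{P},h_{\alpha}^{\vee}\rangle$, which in the paper's conventions reads $\int_{\mathbbm{P}^{1}_{\alpha}}c_{1}(X_{P})=\langle\delta_{P},h_{\alpha}^{\vee}\rangle$ (the $2\pi$ comes from $[{\rm Ric}(\omega)]=2\pi c_{1}(X_{P})$), but you already flagged this bookkeeping as something to fix in the preliminaries.
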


From the above result, one can prove that Eq. (\ref{TKE-equation}) can be always solved in the setting of flag varieties if $\beta$ is $G$-invariant, i.e., we have the following.

\begin{corollarynon}
Given a K\"{a}hler class $\xi$ on $X_{P}$ and a $G$-invariant $(1,1)$-form $ \beta \in 2\pi \big ( c_{1}(X_{P}) - \xi \big)$, then there exist a unique $G$-invariant K\"{a}hler metric $\omega \in 2\pi \xi$, such that 
\begin{equation}
\label{tKEeq}
{\rm{Ric}}(\omega) = \omega + \beta.
\end{equation}
\end{corollarynon}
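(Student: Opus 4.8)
The plan is to deduce this directly from Theorem~A. One formal point must be dealt with first: Theorem~A is phrased for a $G$-invariant form $\beta$ representing the integral class $c_{1}(L)$ of a line bundle $L$, whereas here $\beta$ represents an arbitrary real class $2\pi\big(c_{1}(X_{P}) - \xi\big)$. However, the reduction of Eq.~(\ref{TKE-equation}) to the numerical condition $\int_{\mathbbm{P}_{\alpha}^{1}}\beta < 2\pi\langle \delta_{P}, h_{\alpha}^{\vee}\rangle$ carried out in the proof of Theorem~A uses only that $\beta$ is a closed $G$-invariant $(1,1)$-form --- one averages and reduces to a convex problem in $\mathfrak{h}^{\ast}_{\mathbbm{R}}$, where integrality plays no role --- so I would first record that Theorem~A holds, with the same statement, for every such $\beta$ with $[\beta] = 2\pi\big(c_{1}(X_{P})-\xi\big)$ and $\xi$ a K\"ahler class. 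Granting this, it remains to check the strict inequality for each $\alpha \in \Sigma\setminus\Theta$; the uniqueness assertion then follows from Theorem~A, and any solution $\omega$ automatically satisfies $[\omega] = 2\pi c_{1}(X_{P}) - [\beta] = 2\pi\xi$.

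To check the inequality, I would use that $\delta_{P}$ is, by construction, the weight attached to the anticanonical bundle $K_{X_{P}}^{-1}$; equivalently, $\int_{\mathbbm{P}_{\alpha}^{1}}\beta_{0} = 2\pi\langle \delta_{P}, h_{\alpha}^{\vee}\rangle$ for every $G$-invariant representative $\beta_{0}$ of $2\pi c_{1}(X_{P})$ and every $\alpha \in \Sigma\setminus\Theta$. Since $[\beta] = 2\pi\big(c_{1}(X_{P}) - \xi\big)$, integrating over the Schubert curve $\mathbbm{P}_{\alpha}^{1}$ yields
\[
\int_{\mathbbm{P}_{\alpha}^{1}}\beta \;=\; 2\pi\langle \delta_{P}, h_{\alpha}^{\vee}\rangle \;-\; 2\pi\!\int_{\mathbbm{P}_{\alpha}^{1}}\xi , \qquad \alpha \in \Sigma\setminus\Theta .
\]
Because $\xi$ is a K\"ahler class and each $\mathbbm{P}_{\alpha}^{1}\subset X_{P}$ is an honest algebraic curve, $\int_{\mathbbm{P}_{\alpha}^{1}}\xi > 0$; hence $\int_{\mathbbm{P}_{\alpha}^{1}}\beta < 2\pi\langle \delta_{P}, h_{\alpha}^{\vee}\rangle$ for all $\alpha \in \Sigma\setminus\Theta$, and (the real version of) Theorem~A produces the desired unique $G$-invariant metric $\omega \in 2\pi\xi$.

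I expect the only genuine obstacle to be the first step: convincing oneself cleanly that the Lie-theoretic reduction behind Theorem~A never uses the integrality of $c_{1}(L)$, and therefore delivers, for an arbitrary $G$-invariant closed $(1,1)$-form $\beta$ with $[\beta] = 2\pi\big(c_{1}(X_{P})-\xi\big)$, the same equivalence between solvability of Eq.~(\ref{tKEeq}) and the displayed inequality. Once that is in place, the remainder is the elementary fact that a K\"ahler class pairs strictly positively with the curve classes $[\mathbbm{P}_{\alpha}^{1}]$ generating ${\rm{NE}}(X_{P})$ --- which is exactly what the inequality reduces to after the factors of $2\pi$ cancel.
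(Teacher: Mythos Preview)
Your proposal is correct and follows essentially the same route as the paper: verify the numerical inequality $\int_{\mathbbm{P}_{\alpha}^{1}}\beta < 2\pi\langle\delta_{P}, h_{\alpha}^{\vee}\rangle$ by pairing $2\pi(c_{1}(X_{P})-\xi)$ with the Schubert curves and using that $\langle\xi,[\mathbbm{P}_{\alpha}^{1}]\rangle>0$, then invoke Theorem~A. The paper dispatches the integrality issue you flag with a single phrase (``replace $c_{1}(L)$ by $2\pi(c_{1}(X_{P})-\xi)$''), but your observation is exactly right---the proof of Theorem~A only uses the real numbers $\int_{\mathbbm{P}_{\alpha}^{1}}\beta$, never that they lie in $\mathbbm{Z}$.
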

Given a semipositive $(1,1)$-form $\beta$ on a compact K\"{a}hler manifold $(X,\omega)$, denoting by $S(\omega)$ the Chern scalar curvature of $\omega$, if 
\begin{equation}
S(\omega) - \Lambda_{\omega}(\beta) = {\text{const.}},
\end{equation}
we say that $\omega$ is a $\beta${\textit{-twisted constant scalar curvature K\"{a}hler metric}} ($\beta$-twisted cscK metric). The equation above arises in the study and construction of constant scalar curvature K\"{a}hler metrics (cscK metrics), e.g. \cite{Fine1}, \cite{Fine2}, \cite{SongTian}. In \cite[Theorem 4.5]{BB}, it was shown that a $\beta$-twisted cscK metric is unique in each cohomology class. In the particular setting of flag varieties, by taking the trace with respect to $\omega$ in Eq. (\ref{tKEeq}), we obtain the following result.

\begin{corollarynon}
Given a K\"{a}hler class $\xi$ on $X_{P}$ and a $G$-invariant $(1,1)$-form $ \beta \in 2\pi \big ( c_{1}(X_{P}) - \xi \big)$, then there exists a (unique) $G$-invariant K\"{a}hler metric $\omega \in 2\pi \xi$ with constant $\beta$-twisted scalar curvature, such that
\begin{equation}
S(\omega) - \Lambda_{\omega}(\beta) = \dim_{\mathbbm{C}}(X_{P}),
\end{equation}
where $S(\omega)$ denotes the Chern scalar curvature of $\omega$.
\end{corollarynon}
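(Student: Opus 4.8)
The plan is to read this corollary off from the preceding one by a single trace computation, reusing all of the genuine work — the solvability of the twisted equation — rather than redoing it. First I would apply the preceding corollary to the given K\"ahler class $\xi$ and the given $G$-invariant $(1,1)$-form $\beta \in 2\pi\big(c_{1}(X_{P}) - \xi\big)$: it yields a (unique) $G$-invariant K\"ahler metric $\omega \in 2\pi\xi$ with ${\rm{Ric}}(\omega) = \omega + \beta$. Writing $n \ce \dim_{\mathbbm{C}}(X_{P})$, I would then take the trace $\Lambda_{\omega}$ of both sides of this identity.

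The one ingredient needed is the standard K\"ahler identity $\Lambda_{\omega}\big({\rm{Ric}}(\omega)\big) = S(\omega)$, where $S(\omega)$ is the Chern scalar curvature — on a K\"ahler manifold the Chern connection coincides with the Levi-Civita connection, so this is just the definition of the scalar curvature as the $\omega$-trace of the Ricci form — together with $\Lambda_{\omega}\omega = n$. Applying $\Lambda_{\omega}$ to ${\rm{Ric}}(\omega) = \omega + \beta$ then gives
\[
S(\omega) = n + \Lambda_{\omega}(\beta),
\]
that is, $S(\omega) - \Lambda_{\omega}(\beta) = n = \dim_{\mathbbm{C}}(X_{P})$, a constant; hence the metric $\omega$ furnished by the preceding corollary is a $G$-invariant $\beta$-twisted cscK metric in $2\pi\xi$ with exactly the asserted value of the constant.

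For the parenthetical uniqueness I would argue in one of two ways. Within the class of $G$-invariant metrics it is immediate: $X_{P} = G^{\mathbbm{C}}/P$ is $G$-homogeneous, so every $G$-invariant function on it is constant, whence two $G$-invariant K\"ahler representatives of the same cohomology class — which differ by $\sqrt{-1}\,\partial\bar{\partial}$ of a $G$-invariant potential — must coincide; in particular $\omega$ is the only $G$-invariant candidate. If one instead wants uniqueness in the full class $2\pi\xi$, one invokes \cite[Theorem 4.5]{BB} on uniqueness of a $\beta$-twisted cscK metric in a fixed cohomology class, the value of the constant being forced by the topological identity $\int_{X_{P}}\big(S(\omega) - \Lambda_{\omega}(\beta)\big)\,\omega^{n}\big/\int_{X_{P}}\omega^{n} = n$.

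I expect no substantial obstacle here, since the analytic content has already been handled in Theorem~A and the preceding corollary. The only points requiring care are bookkeeping — fixing normalization conventions so that $\Lambda_{\omega}\big({\rm{Ric}}(\omega)\big) = S(\omega)$ and $\Lambda_{\omega}\omega = \dim_{\mathbbm{C}}(X_{P})$ hold exactly as written — and, if one wants the stronger non-invariant uniqueness, verifying the semipositivity hypothesis of \cite[Theorem 4.5]{BB}, which is built into the definition of a $\beta$-twisted cscK metric adopted in the excerpt.
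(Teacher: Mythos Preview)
Your proposal is correct and matches the paper's approach exactly: the paper obtains this corollary in one line by taking the trace with respect to $\omega$ of the twisted K\"{a}hler--Einstein equation ${\rm{Ric}}(\omega) = \omega + \beta$ furnished by the preceding corollary. Your additional remarks on uniqueness are more detailed than the paper's, which simply inherits uniqueness from the $G$-invariant representative.
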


Let $\mathcal{K}(X)$ be the K\"{a}hler cone of a compact K\"{a}hler manifold $X$. For any K\"{a}hler class $\xi \in \mathcal{K}(X)$, one can define its greatest Ricci lower bound $R(\xi)$ as being
\begin{equation}
R(\xi) := \sup\{ r \in \mathbbm{R} \ | \ \exists \ {\text{K\"{a}hler form}} \ \omega \in 2\pi \xi, \ {\text{s.t.}} \ {\rm{Ric}}(\omega) \geq r \omega  \}. 
\end{equation}
This invariant was first studied by Tian in \cite{Tian1} for the case $\xi = c_{1}(X)$ and further studied, for instance, in \cite{YRubinstein}, \cite{YRubinstein1}, \cite{SzeGabor}, \cite{SongWang}. As in the case of the $\delta$-invariant, few examples of explicit computations of the greatest Ricci lower bound are known so far, see for instance \cite{ChiLi} for case when $X$ is a toric Fano manifold. Using the previous results, we prove the following explicit expression for the greatest Ricci lower bound $R(\xi)$ of any K\"{a}hler class $\xi$ on a flag variety $X_{P}$.
\begin{corollarynon}
\label{GRLBFlag}
Let $R \colon \mathcal{K}(X_{P}) \to \mathbbm{R}$, such that $R(\xi)$ is the greatest Ricci lower bound of $\xi \in \mathcal{K}(X_{P})$. Then, we have
\begin{equation}
R(\xi) = \min \bigg \{ \frac{\langle \delta_{P},h_{\alpha}^{\vee}  \rangle}{a_{\alpha}} \ \bigg | \  \alpha \in \Sigma \backslash \Theta \bigg \},
\end{equation}
such that $a_{\alpha} = \langle \xi, [\mathbbm{P}_{\alpha}^{1}]\rangle$, $\forall \alpha \in \Sigma \backslash \Theta$.
\end{corollarynon}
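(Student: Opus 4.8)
The plan is to reduce the computation of $R(\xi)$ to the unconditional solvability statement for the $G$-invariant twisted equation established above (the Corollary asserting that, for every Kähler class and every $G$-invariant twisting form $\beta$, the equation ${\rm Ric}(\omega)=\omega+\beta$ admits a $G$-invariant Kähler solution), via the rescaling $\omega\mapsto r\omega$, and then to identify the threshold using the intersection numbers $\langle\delta_P,h^\vee_\alpha\rangle=\langle c_1(X_P),[\mathbbm{P}^1_\alpha]\rangle$. Write $r_0:=\min\{\langle\delta_P,h^\vee_\alpha\rangle/a_\alpha : \alpha\in\Sigma\setminus\Theta\}$; this is a positive real number, since each $a_\alpha=\langle\xi,[\mathbbm{P}^1_\alpha]\rangle>0$ ($\xi$ being Kähler and $[\mathbbm{P}^1_\alpha]$ effective) and $\langle\delta_P,h^\vee_\alpha\rangle>0$ ($X_P$ being Fano).

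For the bound $R(\xi)\le r_0$, take any Kähler form $\omega\in 2\pi\xi$ with ${\rm Ric}(\omega)\ge r\omega$ and note that $\theta:={\rm Ric}(\omega)-r\omega$ is a closed semipositive real $(1,1)$-form; its restriction to each rational curve $\mathbbm{P}^1_\alpha$ is semipositive there, so $\int_{\mathbbm{P}^1_\alpha}\theta\ge 0$. Since $\int_{\mathbbm{P}^1_\alpha}{\rm Ric}(\omega)=2\pi\langle\delta_P,h^\vee_\alpha\rangle$ and $\int_{\mathbbm{P}^1_\alpha}\omega=2\pi a_\alpha$, this forces $r\le\langle\delta_P,h^\vee_\alpha\rangle/a_\alpha$ for every $\alpha$, hence $r\le r_0$; taking the supremum over all admissible $r$ gives $R(\xi)\le r_0$.

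For the reverse bound, fix $r$ with $0<r<r_0$. For each $\alpha$ one has $\langle c_1(X_P)-r\xi,[\mathbbm{P}^1_\alpha]\rangle=\langle\delta_P,h^\vee_\alpha\rangle-ra_\alpha>0$, and since the classes $[\mathbbm{P}^1_\alpha]$ generate the cone ${\rm NE}(X_P)$ and $\mathcal{K}(X_P)$ is the interior of its dual cone, the class $c_1(X_P)-r\xi$ is Kähler; choose a $G$-invariant Kähler form $\eta\in 2\pi\big(c_1(X_P)-r\xi\big)$ (average any Kähler representative over the compact group $G$). Applying the solvability Corollary to the Kähler class $r\xi$ and the $G$-invariant form $\eta\in 2\pi\big(c_1(X_P)-r\xi\big)$ produces a $G$-invariant Kähler form $\tilde\omega\in 2\pi r\xi$ with ${\rm Ric}(\tilde\omega)=\tilde\omega+\eta$. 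Setting $\omega:=\tfrac1r\tilde\omega\in 2\pi\xi$ and using that the Ricci form is unchanged under scaling the metric by a positive constant, ${\rm Ric}(\omega)={\rm Ric}(\tilde\omega)=\tilde\omega+\eta=r\omega+\eta\ge r\omega$ because $\eta\ge 0$; hence $R(\xi)\ge r$. Letting $r\uparrow r_0$ gives $R(\xi)\ge r_0$, and combining the two inequalities proves the formula.

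Essentially all the analytic content is absorbed into the solvability Corollary, so there is no hard PDE step left and the argument is bookkeeping. The two points worth stating carefully are: (i) the $2\pi$-normalizations relating ${\rm Ric}(\omega)$, $c_1(X_P)$ and $\delta_P$, which are what make the threshold come out as $\langle\delta_P,h^\vee_\alpha\rangle/a_\alpha$ rather than a rescaled version of it; and (ii) the need to take the twist $\eta$ simultaneously $G$-invariant and semipositive, which is why it is chosen to be an honest $G$-invariant Kähler form in the ample class $c_1(X_P)-r\xi$ — and this class is ample precisely when $r<r_0$, which is exactly the origin of the stated minimum.
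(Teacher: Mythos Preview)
Your proof is correct and follows the same strategy as the paper's: both invoke the solvability Corollary for the class $r\xi$ (respectively $s\xi$), obtain a $G$-invariant twisted K\"ahler--Einstein metric there, and rescale by $1/r$ to land in $2\pi\xi$, using that the $G$-invariant representative of $c_1(X_P)-r\xi$ is semipositive precisely when $r\le r_0$. In fact your argument is slightly more complete than the paper's written proof, which only establishes $R(\xi)\ge R_0$ explicitly and then asserts equality; you supply the missing (easy) upper bound $R(\xi)\le r_0$ by integrating the semipositive form ${\rm Ric}(\omega)-r\omega$ over the generating curves $\mathbbm{P}^1_\alpha$.
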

Given a K\"{a}hler class $\xi \in \mathcal{K}(X)$ of a Fano manifold $X$, it was shown in \cite[Theorem 4.1]{ZhangKewei} that 
\begin{equation}
R(\xi)^{n}{\rm{Vol}}(\xi) \leq (n+1)^{n},
\end{equation}
such that $n = \dim_{\mathbbm{C}}(X)$. Also, if $X$ is Fano and admits a K\"{a}hler-Einstein metric, it follows from \cite{Fujita} that
\begin{equation}
\label{VolFano}
(-K_{X})^{n} \leq (n+1)^{n}.
\end{equation}
It is worth pointing out that the inequality above was first proved for the particular case of flag varieties in \cite{Snow}. From the explicit description provided in Corollary \ref{GRLBFlag} for the the greatest Ricci lower bound of K\"{a}hler classes of flag varieties, we prove the following result relating the inequalities above.
\begin{thm}
\label{VolBound}
For every $\xi \in \mathcal{K}(X_{P})$, the following inequalities hold
\begin{equation}
R(\xi)^{n}{\rm{Vol}}(\xi) \leq (-K_{X_{P}})^{n} \leq (n+1)^{n},
\end{equation}
such that $n = \dim_{\mathbbm{C}}(X_{P})$.
\end{thm}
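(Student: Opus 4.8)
The plan is to derive the entire chain of inequalities directly from the explicit formula of Corollary~\ref{GRLBFlag}, using only elementary positivity of nef classes; the rightmost inequality is then essentially classical. Set $n=\dim_{\mathbbm{C}}(X_{P})$, write $a_{\alpha}=\langle \xi,[\mathbbm{P}^{1}_{\alpha}]\rangle$ for $\alpha\in\Sigma\setminus\Theta$, and recall the standard identification $\langle c_{1}(X_{P}),[\mathbbm{P}^{1}_{\alpha}]\rangle=\langle \delta_{P},h_{\alpha}^{\vee}\rangle$ already used in the proof of Theorem~A. First I would rephrase Corollary~\ref{GRLBFlag} as a nefness statement: from $R(\xi)\leq \langle \delta_{P},h_{\alpha}^{\vee}\rangle/a_{\alpha}$ we get $R(\xi)\,a_{\alpha}\leq \langle \delta_{P},h_{\alpha}^{\vee}\rangle$, i.e. $\langle\, c_{1}(X_{P})-R(\xi)\xi,\,[\mathbbm{P}^{1}_{\alpha}]\,\rangle\geq 0$, for every $\alpha\in\Sigma\setminus\Theta$; since the curves $\mathbbm{P}^{1}_{\alpha}$ generate the (closed, rational polyhedral) Mori cone ${\rm{NE}}(X_{P})$ --- part of Theorem~A --- the $(1,1)$-class $N\ce c_{1}(X_{P})-R(\xi)\xi$ is nef. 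Note also $R(\xi)>0$, because $\langle \delta_{P},h_{\alpha}^{\vee}\rangle=\langle c_{1}(X_{P}),[\mathbbm{P}^{1}_{\alpha}]\rangle>0$ ($-K_{X_{P}}$ being ample) and $a_{\alpha}>0$ ($\xi$ being K\"ahler).

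For the left inequality I would use that on the smooth projective variety $X_{P}$ the top intersection number of any $n$ nef classes is nonnegative (by continuity from the K\"ahler cone). Expanding $c_{1}(X_{P})^{n}=(R(\xi)\xi+N)^{n}$, with $\xi$ K\"ahler (hence nef) and $N$ nef, gives
\[
(-K_{X_{P}})^{n}=c_{1}(X_{P})^{n}=\sum_{k=0}^{n}\binom{n}{k}R(\xi)^{k}\,\big(\xi^{k}\cdot N^{\,n-k}\big)\ \geq\ R(\xi)^{n}\,\xi^{n}=R(\xi)^{n}\,{\rm{Vol}}(\xi),
\]
since every summand is nonnegative and the $k=n$ term equals $R(\xi)^{n}\,{\rm{Vol}}(\xi)$. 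Equivalently, $R(\xi)\xi\leq c_{1}(X_{P})$ in the nef order and monotonicity of the volume function gives ${\rm{Vol}}(R(\xi)\xi)\leq{\rm{Vol}}(c_{1}(X_{P}))$.

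For the right inequality, $(-K_{X_{P}})^{n}\leq(n+1)^{n}$ is inequality~\eqref{VolFano} of \cite{Fujita}, proved for flag varieties already in \cite{Snow}; alternatively, applying the bound of \cite{ZhangKewei} to $\xi=c_{1}(X_{P})$ and using that $R(c_{1}(X_{P}))=1$ (immediate from Corollary~\ref{GRLBFlag}, since then $a_{\alpha}=\langle\delta_{P},h_{\alpha}^{\vee}\rangle$) yields $(-K_{X_{P}})^{n}={\rm{Vol}}(c_{1}(X_{P}))\leq(n+1)^{n}$. Concatenating the two inequalities proves the theorem. The only point that genuinely uses the geometry of flag varieties --- and the step I expect to be the main obstacle --- is the first one: one must know that the Schubert curves $\mathbbm{P}^{1}_{\alpha}$, $\alpha\in\Sigma\setminus\Theta$, are precisely the extremal generators of ${\rm{NE}}(X_{P})$, so that ``nonnegative against every $[\mathbbm{P}^{1}_{\alpha}]$'' is equivalent to nefness of $c_{1}(X_{P})-R(\xi)\xi$. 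This is the same structural input already invoked for Theorem~A; granting it, the remaining steps are routine.
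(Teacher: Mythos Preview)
Your argument is correct. Both the left and right inequalities are established soundly, and your identification of the key structural input---that the curves $\mathbbm{P}^{1}_{\alpha}$ generate ${\rm NE}(X_{P})$---is exactly what is needed (and is stated in the paper as a remark following Proposition~\ref{C8S8.2Sub8.2.3P8.2.6}).

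However, your route to the left inequality differs from the paper's. You argue intersection-theoretically: $N=c_{1}(X_{P})-R(\xi)\xi$ is nef, so the binomial expansion of $(R(\xi)\xi+N)^{n}$ has all terms nonnegative, whence $R(\xi)^{n}\xi^{n}\leq c_{1}(X_{P})^{n}$. The paper instead computes directly with the Azad--Biswas volume formula (Theorem~\ref{volumeflagform}): writing ${\rm Vol}(\xi)$ as $V_{0}$ times a product over $\Pi^{+}\setminus\langle\Theta\rangle^{+}$ of ratios, one multiplies numerator and denominator by $R(\xi)^{n}$ (there being exactly $n=|\Pi^{+}\setminus\langle\Theta\rangle^{+}|$ factors), uses $a_{\alpha}R(\xi)\leq\langle\delta_{P},h_{\alpha}^{\vee}\rangle$ termwise inside each factor, and bounds the whole ratio by $1$. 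Your approach is more conceptual and portable---it would work verbatim on any smooth Fano variety once one has an analogue of Corollary~\ref{GRLBFlag} and a description of the Mori cone---at the cost of invoking positivity of mixed nef intersection numbers. The paper's approach stays entirely within the Lie-theoretic toolkit (the explicit volume formula and elementary inequalities on root data), which is precisely the point the authors emphasize: their proof is independent of the general results of \cite{ZhangKewei} and \cite{Fujita} and uses only Lie theory together with \cite{Snow}. For the right inequality both you and the paper ultimately appeal to \cite{Snow}; your alternative route via \cite{ZhangKewei} and $R(c_{1}(X_{P}))=1$ is fine but is exactly what the authors wished to avoid.
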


It is worth mentioning that the proof which we present in this work for the result above is independent of the results provided in \cite[Theorem 4.1]{ZhangKewei} and \cite{Fujita}. In fact, we prove Theorem \ref{VolBound} using essentially tools from Lie theory and the result provided in \cite{Snow}. 

\subsection{Outline of the paper} In Section \ref{generalities}, we review some facts about the geometry of flag varieties. In Section \ref{main-results}, we prove our main results. In Section \ref{sec-examples}, we work out some examples in order to determine twisted K\"ahler-Einstein metrics. These examples include full flag varieties, projectivization of the tangent bundle of $\mathbbm{P}^{n+1}$, and families of flag varieties with Picard number $2$.
 
\section{Generalities on flag varieties}\label{generalities}
In this section, we review some basic facts about flag varieties. For more details on the subject presented in this section, we suggest \cite{Akhiezer}, \cite{Flagvarieties}, \cite{HumphreysLAG}, \cite{BorelRemmert}.
\subsection{The Picard group of flag varieties}
\label{subsec3.1}
Let $G^{\mathbbm{C}}$ be a connected, simply connected, and complex Lie group with simple Lie algebra $\mathfrak{g}^{\mathbbm{C}}$. By fixing a Cartan subalgebra $\mathfrak{h}$ and a simple root system $\Sigma \subset \mathfrak{h}^{\ast}$, we have a decomposition of $\mathfrak{g}^{\mathbbm{C}}$ given by
\begin{center}
$\mathfrak{g}^{\mathbbm{C}} = \mathfrak{n}^{-} \oplus \mathfrak{h} \oplus \mathfrak{n}^{+}$, 
\end{center}
where $\mathfrak{n}^{-} = \sum_{\alpha \in \Pi^{-}}\mathfrak{g}_{\alpha}$ and $\mathfrak{n}^{+} = \sum_{\alpha \in \Pi^{+}}\mathfrak{g}_{\alpha}$, here we denote by $\Pi = \Pi^{+} \cup \Pi^{-}$ the root system associated to the simple root system $\Sigma = \{\alpha_{1},\ldots,\alpha_{m}\} \subset \mathfrak{h}^{\ast}$. Let us denote by $\kappa$ the Cartan-Killing form of $\mathfrak{g}^{\mathbbm{C}}$. From this, for every  $\alpha \in \Pi^{+}$ we have $h_{\alpha} \in \mathfrak{h}$, such  that $\alpha = \kappa(\cdot,h_{\alpha})$, and we can choose $x_{\alpha} \in \mathfrak{g}_{\alpha}$ and $y_{-\alpha} \in \mathfrak{g}_{-\alpha}$, such that $[x_{\alpha},y_{-\alpha}] = h_{\alpha}$. From these data, we can define a Borel subalgebra by setting $\mathfrak{b} = \mathfrak{h} \oplus \mathfrak{n}^{+}$. Now we consider the following result (see for instance \cite{Flagvarieties}, \cite{HumphreysLAG}):
\begin{theorem}
Any two Borel subgroups are conjugate.
\end{theorem}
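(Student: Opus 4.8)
The plan is to reduce the statement to Borel's fixed point theorem: a connected solvable linear algebraic group acting on a non-empty complete variety has a fixed point. Granting this, the proof splits into two parts. First, one exhibits at least one Borel subgroup $B$ (that is, a maximal connected solvable subgroup of $G^{\mathbbm{C}}$) whose homogeneous space $G^{\mathbbm{C}}/B$ is a complete variety; second, one feeds this into the fixed point theorem to conjugate an arbitrary Borel subgroup onto $B$.

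For the first part I would fix a faithful rational representation $\rho \colon G^{\mathbbm{C}} \hookrightarrow \GL(V)$ and let $\mathcal{F}$ be the variety of full flags of $V$, a projective variety on which $\GL(V)$ acts transitively with connected solvable stabilizers (the Borel subgroups of $\GL(V)$). Choose a $G^{\mathbbm{C}}$-orbit $\mathcal{O} \subset \mathcal{F}$ of minimal dimension; since the complement of an orbit in its closure is a union of orbits of strictly smaller dimension, $\mathcal{O}$ is closed in $\mathcal{F}$, hence complete, and $\mathcal{O} \cong G^{\mathbbm{C}}/H$ where $H$ is the intersection of $G^{\mathbbm{C}}$ with a conjugate of the upper-triangular subgroup of $\GL(V)$; in particular $H$ is solvable. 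Passing from $H$ to its identity component $H^{\circ}$ changes $\mathcal{O}$ only by a finite, hence proper, morphism, so $G^{\mathbbm{C}}/H^{\circ}$ is still complete; choosing a Borel subgroup $B \supseteq H^{\circ}$ and using that $G^{\mathbbm{C}}/H^{\circ} \twoheadrightarrow G^{\mathbbm{C}}/B$ is a surjective morphism of varieties, the image $G^{\mathbbm{C}}/B$ is complete as well.

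For the second part, let $B'$ be an arbitrary Borel subgroup. It is connected and solvable, so its left-translation action on the non-empty complete variety $G^{\mathbbm{C}}/B$ has a fixed point $gB$ by Borel's fixed point theorem; this means precisely that $g^{-1}B'g \subseteq B$. Since conjugation is an automorphism of $G^{\mathbbm{C}}$, the subgroup $g^{-1}B'g$ is again maximal connected solvable, and being contained in the connected solvable subgroup $B$ it must equal $B$. Hence every Borel subgroup is conjugate to $B$, and therefore any two Borel subgroups are conjugate.

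The genuinely hard step is the completeness of $G^{\mathbbm{C}}/B$ --- equivalently, the fact that a maximal connected solvable subgroup is parabolic --- which is the core of Borel's structure theory of linear algebraic groups; the fixed point argument in the second part is then essentially formal. Since in this paper the statement is needed only as background, I would present the argument in this form and refer the reader to \cite{Flagvarieties}, \cite{HumphreysLAG} for the remaining details. One may alternatively argue at the level of Lie algebras: every Borel subalgebra of $\mathfrak{g}^{\mathbbm{C}}$ contains a Cartan subalgebra, all Cartan subalgebras are conjugate under inner automorphisms, and the Weyl group --- realized by conjugation inside $G^{\mathbbm{C}}$ --- permutes simply transitively the Borel subalgebras containing a fixed Cartan subalgebra, which yields the same conclusion.
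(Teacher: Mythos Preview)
Your sketch is correct and is precisely the classical argument via Borel's fixed point theorem as given in the references the paper cites (in particular \cite{HumphreysLAG}). Note, however, that the paper does not actually prove this theorem: it is quoted as a known background result with a pointer to \cite{Flagvarieties}, \cite{HumphreysLAG}, and the paper immediately uses it to normalize the choice of Borel subgroup. So there is no ``paper's own proof'' to compare against; your proposal supplies exactly the kind of argument one finds in those references, and your closing remark --- that in this paper the statement is needed only as background and one should refer to the literature for details --- matches what the authors in fact do.
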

From the result above, given a Borel subgroup $B \subset G^{\mathbbm{C}}$, up to conjugation, we can always suppose that $B = \exp(\mathfrak{b})$. In this setting, given a parabolic Lie subgroup $P \subset G^{\mathbbm{C}}$, without loss of generality, we can suppose that
\begin{center}
$P  = P_{\Theta}$, \ for some \ $\Theta \subseteq \Sigma$,
\end{center}
where $P_{\Theta} \subset G^{\mathbbm{C}}$ is the parabolic subgroup which integrates the Lie subalgebra 
\begin{center}

$\mathfrak{p}_{\Theta} = \mathfrak{n}^{+} \oplus \mathfrak{h} \oplus \mathfrak{n}(\Theta)^{-}$, \ with \ $\mathfrak{n}(\Theta)^{-} = \displaystyle \sum_{\alpha \in \langle \Theta \rangle^{-}} \mathfrak{g}_{\alpha}$. 

\end{center}
By definition, it is straightforward to show that $P_{\Theta} = N_{G^{\mathbbm{C}}}(\mathfrak{p}_{\Theta})$, where $N_{G^{\mathbbm{C}}}(\mathfrak{p}_{\Theta})$ is the normalizer in  $G^{\mathbbm{C}}$ of $\mathfrak{p}_{\Theta} \subset \mathfrak{g}^{\mathbbm{C}}$. In what follows, it will be useful for us to consider the following basic chain of Lie subgroups

\begin{center}

$T^{\mathbbm{C}} \subset B \subset P \subset G^{\mathbbm{C}}$.

\end{center}
For each element in the aforementioned chain of Lie subgroups we have the following characterization: 

\begin{itemize}

\item $T^{\mathbbm{C}} = \exp(\mathfrak{h})$;  \ \ (complex torus)

\item $B = N^{+}T^{\mathbbm{C}}$, where $N^{+} = \exp(\mathfrak{n}^{+})$; \ \ (Borel subgroup)

\item $P = P_{\Theta} = N_{G^{\mathbbm{C}}}(\mathfrak{p}_{\Theta})$, for some $\Theta \subset \Sigma \subset \mathfrak{h}^{\ast}$. \ \ (parabolic subgroup)

\end{itemize}
Now let us recall some basic facts about the representation theory of $\mathfrak{g}^{\mathbbm{C}}$, more details can be found in \cite{Humphreys}. For every $\alpha \in \Sigma$, we set 
$$h_{\alpha}^{\vee} = \frac{2}{\kappa(h_{\alpha},h_{\alpha})}h_{\alpha}.$$ 
The fundamental weights $\{\varpi_{\alpha} \ | \ \alpha \in \Sigma\} \subset \mathfrak{h}^{\ast}$ of $(\mathfrak{g}^{\mathbbm{C}},\mathfrak{h})$ are defined by requiring that $\varpi_{\alpha}(h_{\beta}^{\vee}) = \delta_{\alpha \beta}$, $\forall \alpha, \beta \in \Sigma$. We denote by 
$$\Lambda^{+} = \bigoplus_{\alpha \in \Sigma}\mathbbm{Z}_{\geq 0}\varpi_{\alpha},$$ 
the set of integral dominant weights of $\mathfrak{g}^{\mathbbm{C}}$. Let $V$ be an arbitrary finite dimensional $\mathfrak{g}^{\mathbbm{C}}$-module. By considering its weight space decomposition
\begin{center}
$\displaystyle{V = \bigoplus_{\mu \in \Pi(V)}V_{\mu}},$ \ \ \ \ 
\end{center}
such that $V_{\mu} = \{v \in V \ | \ h \cdot v = \mu(h)v, \ \forall h \in \mathfrak{h}\} \neq \{0\}$, $\forall \mu \in \Pi(V) \subset \mathfrak{h}^{\ast}$, from the Lie algebra representation theory we have the following facts:
\begin{enumerate}
\item A highest weight vector (of weight $\lambda$) in a $\mathfrak{g}^{\mathbbm{C}}$-module $V$ is a non-zero vector $v_{\lambda}^{+} \in V_{\lambda}$, such that 
\begin{center}
$x \cdot v_{\lambda}^{+} = 0$, \ \ \ \ \ ($\forall x \in \mathfrak{n}^{+}$).
\end{center}
Such a $\lambda \in \Pi(V)$ satisfying the above condition is called highest weight of $V$;
\item $V$ irreducible $\Longrightarrow$ $\exists$ highest weight vector $v_{\lambda}^{+} \in V$ (unique up to non-zero
scalar multiples) for some $\lambda \in \Pi(V)$; 
\item If $\lambda \in \Lambda^{+}$, then there exists a finite dimensional irreducible $\mathfrak{g}^{\mathbbm{C}}$-module $V$ which has $\lambda$ as highest weight. In this case, we denote $V = V(\lambda)$;

\item For all $\lambda \in \Lambda^{+}$, we have $V(\lambda) = \mathfrak{U}(\mathfrak{g}^{\mathbbm{C}}) \cdot v_{\lambda}^{+}$, where $\mathfrak{U}(\mathfrak{g}^{\mathbbm{C}})$ is the universal enveloping algebra of $\mathfrak{g}^{\mathbbm{C}}$;
\item The fundamental representations are defined by $V(\varpi_{\alpha})$, $\alpha \in \Sigma$; 

\item Given $\lambda \in \Lambda^{+}$, such that $\lambda = \sum_{\alpha}n_{\alpha}\varpi_{\alpha}$, we have
\begin{center}
$\displaystyle v_{\lambda}^{+} = \bigotimes_{\alpha \in \Sigma}(v_{\varpi_{\alpha}}^{+})^{\otimes n_{\alpha}}$ \ \ \ and \ \ \ $\displaystyle V(\lambda) = \mathfrak{U}(\mathfrak{g}^{\mathbbm{C}}) \cdot v_{\lambda}^{+} \subset \bigotimes_{\alpha \in \Sigma}V(\varpi_{\alpha})^{\otimes n_{\alpha}};$
\end{center}
\item For all $\lambda \in \Lambda^{+}$, we have the following equivalence of induced irreducible representations
\begin{center}
$\varrho \colon G^{\mathbbm{C}} \to {\rm{GL}}(V(\lambda))$ \ $\Longleftrightarrow$ \ $\varrho_{\ast} \colon \mathfrak{g}^{\mathbbm{C}} \to \mathfrak{gl}(V(\lambda))$,
\end{center}
such that $\varrho(\exp(x)) = \exp(\varrho_{\ast}x)$, $\forall x \in \mathfrak{g}^{\mathbbm{C}}$, notice that $G^{\mathbbm{C}} = \langle \exp(\mathfrak{g}^{\mathbbm{C}}) \rangle$.
\end{enumerate}
In what follows, for any representation $\varrho \colon G^{\mathbbm{C}} \to {\rm{GL}}(V(\lambda))$, for the sake of simplicity, we shall denote $\varrho(g)v = gv$, for all $g \in G^{\mathbbm{C}}$, and all $v \in V(\lambda)$. Let $G \subset G^{\mathbbm{C}}$ be a compact real form for $G^{\mathbbm{C}}$. Given a complex flag variety $X_{P} = G^{\mathbbm{C}}/P$, regarding $X_{P}$ as a homogeneous $G$-space, that is, $X_{P} = G/G\cap P$, the following theorem allows us to describe all $G$-invariant K\"{a}hler structures on $X_{P}$ by means of elements of representation theory.
\begin{theorem}[Azad-Biswas, \cite{AZAD}]
\label{AZADBISWAS}
Let $\omega \in \Omega^{1,1}(X_{P})^{G}$ be a closed invariant real $(1,1)$-form, then we have

\begin{center}

$\pi^{\ast}\omega = \sqrt{-1}\partial \overline{\partial}\varphi$,

\end{center}
where $\pi \colon G^{\mathbbm{C}} \to X_{P}$, and $\varphi \colon G^{\mathbbm{C}} \to \mathbbm{R}$ is given by 
\begin{center}
$\varphi(g) = \displaystyle \sum_{\alpha \in \Sigma \backslash \Theta}c_{\alpha}\log \big (||gv_{\varpi_{\alpha}}^{+}|| \big )$, \ \ \ \ $(\forall g \in G^\mathbbm{C})$
\end{center}
with $c_{\alpha} \in \mathbbm{R}$, $\forall \alpha \in \Sigma \backslash \Theta$. Conversely, every function $\varphi$ as above defines a closed invariant real $(1,1)$-form $\omega_{\varphi} \in \Omega^{1,1}(X_{P})^{G}$. Moreover, $\omega_{\varphi}$ defines a $G$-invariant K\"{a}hler form on $X_{P}$ if, and only if, $c_{\alpha} > 0$,  $\forall \alpha \in \Sigma \backslash \Theta$.
\end{theorem}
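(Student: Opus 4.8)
The plan is to deduce the statement from two structural facts about the rational homogeneous variety $X_{P}$: (i) a $G$-invariant closed real $(1,1)$-form on $X_{P}$ is rigid in its de Rham class, and (ii) $\mathrm{Pic}(X_{P})$ is free of rank $r=|\Sigma\backslash\Theta|$, generated by the homogeneous line bundles $\mathcal{L}_{\varpi_{\alpha}}=G^{\mathbbm{C}}\times_{P}\mathbbm{C}_{-\varpi_{\alpha}}$, $\alpha\in\Sigma\backslash\Theta$, and becomes $H^{2}(X_{P};\mathbbm{R})$ after tensoring with $\mathbbm{R}$ (Borel--Hirzebruch; see \cite{Flagvarieties},\cite{HumphreysLAG}). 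The argument then falls into three steps.

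First I would prove rigidity. If $\omega,\omega'$ are $G$-invariant closed real $(1,1)$-forms in the same class, the $\partial\overline{\partial}$-lemma on the compact Kähler manifold $X_{P}$ gives $\omega-\omega'=\sqrt{-1}\partial\overline{\partial}f$; averaging $f$ over the compact group $G$ --- legitimate because $\omega-\omega'$ is $G$-invariant, so averaging commutes with $\partial\overline{\partial}$ --- replaces $f$ by a $G$-invariant function, hence by a constant since $X_{P}=G/(G\cap P)$ is compact homogeneous, whence $\omega=\omega'$. Thus the real vector space of $G$-invariant closed real $(1,1)$-forms embeds into $H^{2}(X_{P};\mathbbm{R})$.

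Next I would produce an explicit basis and read off both directions. For $\alpha\in\Sigma\backslash\Theta$, since $\varpi_{\alpha}(h_{\beta}^{\vee})=\delta_{\alpha\beta}=0$ for $\beta\in\Theta$, the highest weight vector $v_{\varpi_{\alpha}}^{+}$ spans a trivial module over the semisimple subalgebra generated by $\{\mathfrak{g}_{\pm\beta} : \beta\in\Theta\}$ and is killed by $\mathfrak{n}^{+}$, so $\mathfrak{p}_{\Theta}\cdot v_{\varpi_{\alpha}}^{+}\subseteq\mathbbm{C}v_{\varpi_{\alpha}}^{+}$; hence the connected group $P=P_{\Theta}$ acts on the line $\mathbbm{C}v_{\varpi_{\alpha}}^{+}$ by a character and $\iota_{\alpha}\colon X_{P}\to\mathbbm{P}(V(\varpi_{\alpha}))$, $gP\mapsto[gv_{\varpi_{\alpha}}^{+}]$, is a well-defined $G$-equivariant holomorphic map. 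Fixing a $G$-invariant Hermitian norm $\|\cdot\|$ on $V(\varpi_{\alpha})$ and setting $\omega_{\alpha}:=\iota_{\alpha}^{\ast}\omega_{\mathrm{FS}}$, the holomorphic lift $g\mapsto gv_{\varpi_{\alpha}}^{+}$ of $\iota_{\alpha}\circ\pi$ to $V(\varpi_{\alpha})\backslash\{0\}$, together with $q^{\ast}\omega_{\mathrm{FS}}=\sqrt{-1}\partial\overline{\partial}\log\|\cdot\|^{2}$ for $q\colon V(\varpi_{\alpha})\backslash\{0\}\to\mathbbm{P}(V(\varpi_{\alpha}))$, gives $\pi^{\ast}\omega_{\alpha}=\sqrt{-1}\partial\overline{\partial}\log\|gv_{\varpi_{\alpha}}^{+}\|^{2}$; moreover $[\omega_{\alpha}]$ is a positive multiple of $c_{1}(\mathcal{L}_{\varpi_{\alpha}})$, so the $[\omega_{\alpha}]$ span $H^{2}(X_{P};\mathbbm{R})$. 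Combining with rigidity, every $G$-invariant closed real $(1,1)$-form equals $\sum_{\alpha\in\Sigma\backslash\Theta}c_{\alpha}\omega_{\alpha}$ for unique $c_{\alpha}\in\mathbbm{R}$, whence $\pi^{\ast}\omega=\sqrt{-1}\partial\overline{\partial}\varphi$ with $\varphi=\sum_{\alpha}c_{\alpha}\log\|gv_{\varpi_{\alpha}}^{+}\|$ (the factor $2$ between $\log\|\cdot\|$ and $\log\|\cdot\|^{2}$ being absorbed into the normalization of $\omega_{\alpha}$); the converse is the same identity read backwards.

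Finally I would settle the Kähler criterion. By $G$-invariance, positivity of $\omega_{\varphi}=\sum_{\alpha}c_{\alpha}\omega_{\alpha}$ is tested at $o=eP$, where $T_{o}^{1,0}X_{P}\cong\bigoplus_{\gamma\in\Pi^{+}\backslash\langle\Theta\rangle^{+}}\mathfrak{g}_{-\gamma}$. Each $\omega_{\alpha}$ is $T$-invariant, hence block-diagonal for this decomposition, and a standard computation (Borel--Hirzebruch/Alekseevsky) identifies its block on $\mathfrak{g}_{-\gamma}$ with $\langle\varpi_{\alpha},h_{\gamma}^{\vee}\rangle$ times a fixed positive-definite Hermitian form, where $h_{\gamma}^{\vee}$ is the coroot of $\gamma$ and $\langle\varpi_{\alpha},h_{\gamma}^{\vee}\rangle\geq 0$ with equality exactly when $\alpha$ does not occur in $\gamma$. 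Hence $\omega_{\varphi}$ is Kähler iff $\sum_{\alpha\in\Sigma\backslash\Theta}c_{\alpha}\langle\varpi_{\alpha},h_{\gamma}^{\vee}\rangle>0$ for all $\gamma\in\Pi^{+}\backslash\langle\Theta\rangle^{+}$; letting $\gamma$ range over the simple roots of $\Sigma\backslash\Theta$, for which the sum collapses to $c_{\gamma}$, forces all $c_{\alpha}>0$, and conversely if all $c_{\alpha}>0$ then every such sum has a positive term and no negative one. I expect the genuine obstacles to be Step~1 (pinning an invariant form to its class) and, in Step~2, the normalization bookkeeping relating $\log\|gv_{\varpi_{\alpha}}^{+}\|$, $\omega_{\mathrm{FS}}$ and $c_{1}(\mathcal{L}_{\varpi_{\alpha}})$, together with verifying that $\varphi$ is $P$-basic --- which follows from $P$ acting on $\mathbbm{C}v_{\varpi_{\alpha}}^{+}$ through a character, so that $\varphi(gp)-\varphi(g)$ depends on $p$ alone and is annihilated by $\sqrt{-1}\partial\overline{\partial}$ in the variable $g$ --- so that $\omega_{\varphi}$ genuinely descends to $X_{P}$.
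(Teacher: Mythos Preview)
The paper does not give its own proof of this statement: Theorem~\ref{AZADBISWAS} is quoted from Azad--Biswas \cite{AZAD} and used as a black box, with only remarks following it. So there is nothing in the paper to compare your argument against.

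That said, your reconstruction is correct and is essentially the standard route to this result. Step~1 (rigidity of invariant forms in their de Rham class via the $\partial\overline{\partial}$-lemma and averaging) is exactly the mechanism the paper later exploits when it observes that closed $G$-invariant $(1,1)$-forms are harmonic and that ${\rm Ric}(\omega_{\varphi})=\rho_{0}$ for every invariant K\"{a}hler $\omega_{\varphi}$. Step~2 is the core of the Azad--Biswas argument, and the paper carries out precisely the normalization bookkeeping you flag as the genuine obstacle in its discussion after the theorem (Eq.~(\ref{functionshermitian}) through the identification $c_{1}(\mathscr{O}_{\alpha}(1))=[\Omega_{\alpha}]$), confirming that your factor-of-two absorption is the right convention. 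Step~3 (positivity tested at $o=eP$ via the coroot pairings $\langle\varpi_{\alpha},h_{\gamma}^{\vee}\rangle$) is the standard Borel--Hirzebruch computation and matches how the paper uses these pairings in Theorem~\ref{volumeflagform}.

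One small tightening: in Step~2 you rely on the fact that the $[\omega_{\alpha}]$ span $H^{2}(X_{P};\mathbbm{R})$, which you attribute to the Picard-group description. The paper establishes this independently in Proposition~\ref{C8S8.2Sub8.2.3P8.2.6} by pairing against the Schubert curves $\mathbbm{P}_{\alpha}^{1}$; that dual-basis argument is slightly cleaner than invoking Borel--Hirzebruch, and would let you avoid the phrase ``a positive multiple of $c_{1}(\mathcal{L}_{\varpi_{\alpha}})$'' in favour of an exact identification.
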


\begin{remark}
\label{innerproduct}
It is worth pointing out that the norm $|| \cdot ||$ in the last theorem is a norm induced from some fixed $G$-invariant inner product $\langle \cdot, \cdot \rangle_{\alpha}$ on $V(\varpi_{\alpha})$, for every $\alpha \in \Sigma \backslash \Theta$. 
\end{remark}

\begin{remark}
An important consequence of Theorem \ref{AZADBISWAS} is that it allows us to describe the local K\"{a}hler potential for any homogeneous K\"{a}hler metric in a quite concrete way, for some examples of explicit computations we suggest \cite{CorreaGrama}, \cite{Correa}.
\end{remark}

By means of the above theorem we can describe the unique $G$-invariant representative in each integral class in $H^{2}(X_{P},\mathbbm{Z})$. In fact, consider the associated $P$-principal bundle $P \hookrightarrow G^{\mathbbm{C}} \to X_{P}$. By choosing a trivializing open covering $X_{P} = \bigcup_{i \in I}U_{i}$, in terms of $\check{C}$ech cocycles we can write 
\begin{center}
$G^{\mathbbm{C}} = \Big \{(U_{i})_{i \in I}, \psi_{ij} \colon U_{i} \cap U_{j} \to P \Big \}$.
\end{center}
Given a fundamental weight $\varpi_{\alpha} \in \Lambda^{+}$, we consider the induced character $\chi_{\varpi_{\alpha}} \in {\text{Hom}}(T^{\mathbbm{C}},\mathbbm{C}^{\times})$, such that $(d\chi_{\varpi_{\alpha}})_{e} = \varpi_{\alpha}$. From the homomorphism $\chi_{\varpi_{\alpha}} \colon P \to \mathbbm{C}^{\times}$ one can equip $\mathbbm{C}$ with a structure of $P$-space, such that $pz = \chi_{\varpi_{\alpha}}(p)^{-1}z$, $\forall p \in P$, and $\forall z \in \mathbbm{C}$. Denoting by $\mathbbm{C}_{-\varpi_{\alpha}}$ this $P$-space, we can form an associated holomorphic line bundle $\mathscr{O}_{\alpha}(1) = G^{\mathbbm{C}} \times_{P}\mathbbm{C}_{-\varpi_{\alpha}}$, which can be described in terms of $\check{C}$ech cocycles by
\begin{equation}
\label{linecocycle}
\mathscr{O}_{\alpha}(1) = \Big \{(U_{i})_{i \in I},\chi_{\varpi_{\alpha}}^{-1} \circ \psi_{i j} \colon U_{i} \cap U_{j} \to \mathbbm{C}^{\times} \Big \},
\end{equation}
that is, $\mathscr{O}_{\alpha}(1) = \{g_{ij}\} \in \check{H}^{1}(X_{P},\mathcal{O}_{X_{P}}^{\ast})$, such that $g_{ij} = \chi_{\varpi_{\alpha}}^{-1} \circ \psi_{i j}$, for every $i,j \in I$. 
\begin{remark}
\label{parabolicdec}
We observe that, if we have a parabolic Lie subgroup $P \subset G^{\mathbbm{C}}$, such that $P = P_{\Theta}$, the decomposition 
\begin{equation}
P_{\Theta} = \big[P_{\Theta},P_{\Theta} \big]T(\Sigma \backslash \Theta)^{\mathbbm{C}}, \ \  {\text{such that }} \ \ T(\Sigma \backslash \Theta)^{\mathbbm{C}} = \exp \Big \{ \displaystyle \sum_{\alpha \in  \Sigma \backslash \Theta}a_{\alpha}h_{\alpha} \ \Big | \ a_{\alpha} \in \mathbbm{C} \Big \},
\end{equation}
e.g. \cite[Proposition 8]{Akhiezer}, shows us that ${\text{Hom}}(P,\mathbbm{C}^{\times}) = {\text{Hom}}(T(\Sigma \backslash \Theta)^{\mathbbm{C}},\mathbbm{C}^{\times})$. Therefore, if we take $\varpi_{\alpha} \in \Lambda^{+}$, such that $\alpha \in \Theta$, it follows that $\mathscr{O}_{\alpha}(1) = X_{P} \times \mathbbm{C}$, i.e., the associated holomorphic line bundle $\mathscr{O}_{\alpha}(1)$ is trivial.
\end{remark}

Given $\mathscr{O}_{\alpha}(1) \in {\text{Pic}}(X_{P})$, such that $\alpha \in \Sigma \backslash \Theta$, as described above, if we consider an open covering $X_{P} = \bigcup_{i \in I} U_{i}$ which trivializes both $P \hookrightarrow G^{\mathbbm{C}} \to X_{P}$ and $ \mathscr{O}_{\alpha}(1) \to X_{P}$, by taking a collection of local sections $(s_{i})_{i \in I}$, such that $s_{i} \colon U_{i} \to G^{\mathbbm{C}}$, we can define $q_{i} \colon U_{i} \to \mathbbm{R}^{+}$, such that 
\begin{equation}
\label{functionshermitian}
q_{i} =  {\mathrm{e}}^{-2\pi \varphi_{\varpi_{\alpha}} \circ s_{i}} = \frac{1}{||s_{i}v_{\varpi_{\alpha}}^{+}||^{2}},
\end{equation}
for every $i \in I$. Since $s_{j} = s_{i}\psi_{ij}$ on $U_{i} \cap U_{j} \neq \emptyset$, and $pv_{\varpi_{\alpha}}^{+} = \chi_{\varpi_{\alpha}}(p)v_{\varpi_{\alpha}}^{+}$, for every $p \in P$, such that $\alpha \in \Sigma \backslash \Theta$, the collection of functions $(q_{i})_{i \in I}$ satisfy $q_{j} = |\chi_{\varpi_{\alpha}}^{-1} \circ \psi_{ij}|^{2}q_{i}$ on $U_{i} \cap U_{j} \neq \emptyset$. Hence, we obtain a collection of functions $(q_{i})_{i \in I}$ which satisfies on $U_{i} \cap U_{j} \neq \emptyset$ the following relation
\begin{equation}
\label{collectionofequ}
q_{j} = |g_{ij}|^{2}q_{i},
\end{equation}
such that $g_{ij} = \chi_{\varpi_{\alpha}}^{-1} \circ \psi_{i j}$, where $i,j \in I$. From this, we can define a Hermitian structure $H$ on $\mathscr{O}_{\alpha}(1)$ by taking on each trivialization $f_{i} \colon L_{\chi_{\varpi_{\alpha}}} \to U_{i} \times \mathbbm{C}$ a metric defined by
\begin{equation}
\label{hermitian}
H(f_{i}^{-1}(x,v),f_{i}^{-1}(x,w)) = q_{i}(x) v\overline{w},
\end{equation}
for $(x,v),(x,w) \in U_{i} \times \mathbbm{C}$. The Hermitian metric above induces a Chern connection $\nabla = d + \partial \log H$ with curvature $F_{\nabla}$ satisfying (locally)
\begin{equation}
\displaystyle \frac{\sqrt{-1}}{2\pi}F_{\nabla} \Big |_{U_{i}} = \frac{\sqrt{-1}}{2\pi} \partial \overline{\partial}\log \Big ( \big | \big | s_{i}v_{\varpi_{\alpha}}^{+}\big | \big |^{2} \Big).
\end{equation}
Therefore, by considering the $G$-invariant $(1,1)$-form $\Omega_{\alpha} \in \Omega^{1,1}(X_{P})^{G}$, which satisfies $\pi^{\ast}\Omega_{\alpha} = \sqrt{-1}\partial \overline{\partial} \varphi_{\varpi_{\alpha}}$, where $\pi \colon G^{\mathbbm{C}} \to G^{\mathbbm{C}} / P = X_{P}$, and $\varphi_{\varpi_{\alpha}}(g) = \frac{1}{2\pi}\log||gv_{\varpi_{\alpha}}^{+}||^{2}$, $\forall g \in G^{\mathbbm{C}}$, we have 
\begin{equation}
\Omega_{\alpha} |_{U_{i}} = (\pi \circ s_{i})^{\ast}\Omega_{\alpha} = \frac{\sqrt{-1}}{2\pi}F_{\nabla} \Big |_{U_{i}},
\end{equation}
i.e., $c_{1}(\mathscr{O}_{\alpha}(1)) = [ \Omega_{\alpha}]$, $\forall \alpha \in \Sigma \backslash \Theta$. By considering ${\text{Pic}}(X_{P}) = H^{1}(X_{P},\mathcal{O}_{X_{P}}^{\ast})$, from the ideas described above we have the following result.
\begin{proposition}
\label{C8S8.2Sub8.2.3P8.2.6}
Let $X_{P}$ be a complex flag variety associated to some parabolic Lie subgroup $P = P_{\Theta}$. Then, we have
\begin{equation}
\label{picardeq}
{\text{Pic}}(X_{P}) = H^{1,1}(X_{P},\mathbbm{Z}) = H^{2}(X_{P},\mathbbm{Z}) = \displaystyle \bigoplus_{\alpha \in \Sigma \backslash \Theta}\mathbbm{Z}[\Omega_{\alpha} ].
\end{equation}
\end{proposition}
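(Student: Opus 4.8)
The plan is to deduce the first three identities in (\ref{picardeq}) from the exponential sheaf sequence, and the last one from the description of line bundles on $X_{P}$ via characters of $P$ recalled above. Recall first that $X_{P}$ admits a decomposition into Schubert cells, all of even real dimension; hence $H^{2\ell+1}(X_{P},\mathbbm{Z}) = 0$ and each $H^{2\ell}(X_{P},\mathbbm{Z})$ is torsion-free, and, $X_{P}$ being moreover a smooth projective rational variety, $H^{q}(X_{P},\mathcal{O}_{X_{P}}) = 0$ for $q \geq 1$ and $H^{2}(X_{P},\mathbbm{C}) = H^{1,1}(X_{P})$. Feeding $H^{1}(X_{P},\mathcal{O}_{X_{P}}) = H^{2}(X_{P},\mathcal{O}_{X_{P}}) = 0$ into the long exact sequence of $0 \to \mathbbm{Z} \to \mathcal{O}_{X_{P}} \xrightarrow{\exp} \mathcal{O}_{X_{P}}^{\ast} \to 0$ shows that $c_{1} \colon \Pic(X_{P}) = H^{1}(X_{P},\mathcal{O}_{X_{P}}^{\ast}) \longrightarrow H^{2}(X_{P},\mathbbm{Z})$ is an isomorphism; and since $H^{2,0}(X_{P}) = \overline{H^{0,2}(X_{P})} = 0$, every integral degree-two class is of type $(1,1)$, so $\Pic(X_{P}) \cong H^{1,1}(X_{P},\mathbbm{Z}) = H^{2}(X_{P},\mathbbm{Z})$.

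It remains to prove that $\big\{\, [\Omega_{\alpha}] = c_{1}(\mathscr{O}_{\alpha}(1)) \ \big|\ \alpha \in \Sigma \backslash \Theta \,\big\}$ is a $\mathbbm{Z}$-basis of $\Pic(X_{P})$. The plan is to identify $\Pic(X_{P}) \cong \Hom(P,\mathbbm{C}^{\times})$ and then read off a basis from Remark \ref{parabolicdec}. Since $G^{\mathbbm{C}}$ is simple and simply connected, $\Pic(G^{\mathbbm{C}}) = 0$ and $\mathcal{O}(G^{\mathbbm{C}})^{\ast} = \mathbbm{C}^{\ast}$, so every holomorphic line bundle on $X_{P} = G^{\mathbbm{C}}/P$ carries a unique $G^{\mathbbm{C}}$-linearization; assigning to a line bundle the character by which $P$ acts on its fibre over $eP$ gives an isomorphism $\Pic(X_{P}) \xrightarrow{\ \sim\ } \Hom(P,\mathbbm{C}^{\times})$, with inverse $\chi \mapsto G^{\mathbbm{C}} \times_{P} \mathbbm{C}_{\chi}$. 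By Remark \ref{parabolicdec}, $\Hom(P,\mathbbm{C}^{\times}) = \Hom(T(\Sigma \backslash \Theta)^{\mathbbm{C}},\mathbbm{C}^{\times})$, and $T(\Sigma \backslash \Theta)^{\mathbbm{C}} = \exp\big(\sum_{\alpha \in \Sigma \backslash \Theta}\mathbbm{C}h_{\alpha}\big) = \exp\big(\sum_{\alpha \in \Sigma \backslash \Theta}\mathbbm{C}h_{\alpha}^{\vee}\big)$ is the subtorus of $T^{\mathbbm{C}}$ whose cocharacter lattice is the primitive sublattice $\bigoplus_{\alpha \in \Sigma \backslash \Theta}\mathbbm{Z}h_{\alpha}^{\vee}$ of the coroot lattice $\bigoplus_{\alpha \in \Sigma}\mathbbm{Z}h_{\alpha}^{\vee}$ --- the latter being all of the cocharacter lattice of $T^{\mathbbm{C}}$ precisely because $G^{\mathbbm{C}}$ is simply connected. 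Dualizing, the defining relation $\varpi_{\beta}(h_{\alpha}^{\vee}) = \delta_{\alpha\beta}$ exhibits the restrictions $\varpi_{\alpha}|_{T(\Sigma \backslash \Theta)^{\mathbbm{C}}}$, $\alpha \in \Sigma \backslash \Theta$, as a $\mathbbm{Z}$-basis of $\Hom(T(\Sigma \backslash \Theta)^{\mathbbm{C}},\mathbbm{C}^{\times})$. Hence the line bundles $\mathscr{O}_{\alpha}(1) = G^{\mathbbm{C}} \times_{P} \mathbbm{C}_{-\varpi_{\alpha}}$, $\alpha \in \Sigma \backslash \Theta$, form a $\mathbbm{Z}$-basis of $\Pic(X_{P})$; applying $c_{1}$ and the identity $c_{1}(\mathscr{O}_{\alpha}(1)) = [\Omega_{\alpha}]$ established above yields the last equality in (\ref{picardeq}).

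The real content --- and the step demanding care --- is the \emph{integral} (rather than merely rational) part of the second paragraph: everything rests on $G^{\mathbbm{C}}$ being simply connected, both to guarantee $\Pic(G^{\mathbbm{C}}) = 0$ (so that all line bundles on $X_{P}$ descend from characters of $P$) and to know that the cocharacter lattice of $T^{\mathbbm{C}}$ is the \emph{full} coroot lattice --- otherwise one would obtain only a finite-index subgroup and the explicit basis could fail. An alternative, closer to the spirit of the rest of the paper, bypasses linearization: compute $\rk H^{2}(X_{P},\mathbbm{Z}) = |\Sigma \backslash \Theta|$ by counting Schubert divisors in the Bruhat decomposition, and then verify that the intersection pairing satisfies $\langle [\Omega_{\beta}],[\mathbbm{P}_{\alpha}^{1}]\rangle = \varpi_{\beta}(h_{\alpha}^{\vee}) = \delta_{\alpha\beta}$ against the Schubert curves $\mathbbm{P}_{\alpha}^{1}$ generating ${\rm{NE}}(X_{P})$, which forces $\{[\Omega_{\alpha}]\}_{\alpha \in \Sigma \backslash \Theta}$ to be a unimodular, hence $\mathbbm{Z}$-, basis of $H^{2}(X_{P},\mathbbm{Z})$. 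The cohomological vanishing invoked in the first paragraph is entirely standard.
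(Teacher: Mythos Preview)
Your argument is correct, but it follows a genuinely different route from the paper's. The paper's sketch is purely topological for the last equality: it computes $\pi_{2}(X_{P})$ via the long exact sequence of the fibration $P \hookrightarrow G^{\mathbbm{C}} \to X_{P}$ (using $\pi_{2}(G^{\mathbbm{C}})=0$ and $\pi_{1}(P)\cong\pi_{1}(T(\Sigma\backslash\Theta)^{\mathbbm{C}})=\mathbbm{Z}^{|\Sigma\backslash\Theta|}$), passes to $H_{2}$ by Hurewicz, and then pins down the $\mathbbm{Z}$-basis of $H^{2}$ by the pairing $\langle c_{1}(\mathscr{O}_{\alpha}(1)),[\mathbbm{P}_{\beta}^{1}]\rangle=\delta_{\alpha\beta}$ with the Schubert curves; the identification $\Pic=H^{1,1}(X_{P},\mathbbm{Z})$ is obtained at the end from the Lefschetz $(1,1)$ theorem and ${\rm{rk}}(\Pic^{0}(X_{P}))=0$. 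Your main argument instead goes through the exponential sequence and the linearization isomorphism $\Pic(X_{P})\cong\Hom(P,\mathbbm{C}^{\times})$, reading off the basis directly from the character lattice of $T(\Sigma\backslash\Theta)^{\mathbbm{C}}$. What this buys you is a cleaner, purely representation-theoretic identification of the basis, and a sharper formulation of exactly where simple connectedness enters; what the paper's approach buys is that it produces the dual basis of curves $[\mathbbm{P}_{\alpha}^{1}]$ of $H_{2}$ along the way, which is precisely the data exploited in the proof of Theorem~\ref{main-thm}. The ``alternative'' you sketch in your final paragraph---count the rank via Schubert cells and then use $\langle[\Omega_{\beta}],[\mathbbm{P}_{\alpha}^{1}]\rangle=\delta_{\alpha\beta}$ for unimodularity---is essentially the paper's argument with the $\pi_{2}$/Hurewicz step replaced by the Bruhat decomposition.
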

\begin{proof}

Let us sketch the proof. The last equality on the right-hand side of Eq. (\ref{picardeq}) follows from the following facts:

\begin{itemize}

\item[(i)] $\pi_{2}(X_{P}) \cong \pi_{1}(T(\Sigma \backslash \Theta)^{\mathbbm{C}}) = \mathbbm{Z}^{|\Sigma \backslash \Theta|}$, where $T(\Sigma \backslash \Theta)^{\mathbbm{C}}$ is given as in Remark \ref{parabolicdec};

\item[(ii)] Since $X_{P}$ is simply connected, it follows that $H_{2}(X_{P},\mathbbm{Z}) \cong \pi_{2}(X_{P})$ (Hurewicz's theorem);

\item[(iii)] By taking $\mathbbm{P}_{\alpha}^{1} \hookrightarrow X_{P}$, such that 
\begin{equation}
\label{Scurve}
\mathbbm{P}_{\alpha}^{1} = \overline{\exp(\mathfrak{g}_{-\alpha})x_{0}} \subset X_{P},
\end{equation}
for all $\alpha \in \Sigma \backslash \Theta$, where $x_{0} = eP \in X_{P}$, it follows that 
\begin{equation}
\big \langle c_{1}(\mathscr{O}_{\alpha}(1)), [ \mathbbm{P}_{\beta}^{1}] \big \rangle = \displaystyle \int_{\mathbbm{P}_{\beta}^{1}} c_{1}(\mathscr{O}_{\alpha}(1)) = \delta_{\alpha \beta},
\end{equation}
for every $\alpha,\beta \in \Sigma \backslash \Theta$. Hence, we obtain
\begin{center}

$\pi_{2}(X_{P}) = \displaystyle \bigoplus_{\alpha \in \Sigma \backslash \Theta} \mathbbm{Z} [ \mathbbm{P}_{\alpha}^{1}],$ \ \ and \ \ $H^{2}(X_{P},\mathbbm{Z}) = \displaystyle \bigoplus_{\alpha \in \Sigma \backslash \Theta}  \mathbbm{Z} c_{1}(\mathscr{O}_{\alpha}(1))$.

\end{center}
\end{itemize}
Moreover, from above we also have $H^{1,1}(X_{P},\mathbbm{Z}) = H^{2}(X_{P},\mathbbm{Z})$. In order to conclude the proof, from the Lefschetz theorem on (1,1)-classes \cite{MR2093043}, and from the fact that ${\text{rk}}({\text{Pic}}^{0}(X_{P})) = 0$, we obtain the first equality in Eq. (\ref{picardeq}).
\end{proof}

\begin{remark}[Harmonic 2-forms on $X_{P}$]Given any $G$-invariant Riemannian metric $g$ on $X_{P}$, let us denote by $\mathscr{H}^{2}(X_{P},g)$ the space of real harmonic 2-forms on $X_{P}$ with respect to $g$, and by $\mathscr{I}_{G}^{1,1}(X_{P})$ the space of closed invariant real $(1,1)$-forms. Combining the result of Proposition \ref{C8S8.2Sub8.2.3P8.2.6} with \cite[Lemma 3.1]{MR528871}, we obtain 
\begin{equation}
\mathscr{I}_{G}^{1,1}(X_{P}) = \mathscr{H}^{2}(X_{P},g). 
\end{equation}
Therefore, the closed $G$-invariant real $(1,1)$-forms described in Theorem \ref{AZADBISWAS} are harmonic with respect to any $G$-invariant Riemannian metric on $X_{P}$.
\end{remark}

\begin{remark}[K\"{a}hler cone of $X_{P}$]It follows from Eq. (\ref{picardeq}) and Theorem \ref{AZADBISWAS} that the K\"{a}hler cone of a complex flag variety $X_{P}$ is given explicitly by
\begin{equation}
\mathcal{K}(X_{P}) = \displaystyle \bigoplus_{\alpha \in \Sigma \backslash \Theta} \mathbbm{R}^{+}[ \Omega_{\alpha}].
\end{equation}
\end{remark}

\begin{remark}[Cone of curves of $X_{P}$] It is worth observing that the cone of curves ${\rm{NE}}(X_{P})$ of a flag variety $X_{P}$ is generated by the rational curves $[\mathbbm{P}_{\alpha}^{1}] \in \pi_{2}(X_{P})$, $\alpha \in \Sigma \backslash \Theta$, see for instance \cite[\S 18.3]{Timashev} and references therein.

\end{remark}

\subsection{The first Chern class of flag varieties} In this subsection, we will review some basic facts related to the Ricci form of $G$-invariant K\"{a}hler metrics on flag varieties. 

Let $X_{P}$ be a complex flag variety associated to some parabolic Lie subgroup $P = P_{\Theta} \subset G^{\mathbbm{C}}$. By considering the identification $T_{x_{0}}^{1,0}X_{P} \cong \mathfrak{m} \subset \mathfrak{g}^{\mathbbm{C}}$, such that 

\begin{center}
$\mathfrak{m} = \displaystyle \sum_{\alpha \in \Pi^{+} \backslash \langle \Theta \rangle^{+}} \mathfrak{g}_{-\alpha}$,
\end{center}
where $x_{0} = eP \in X_{P}$, we can realize $T^{1,0}X_{P}$ as being a holomoprphic vector bundle associated to the $P$-principal bundle $P \hookrightarrow G^{\mathbbm{C}} \to X_{P}$ given by

\begin{center}

$T^{1,0}X_{P} = G^{\mathbbm{C}} \times_{P} \mathfrak{m}$.

\end{center}
The twisted product on the right-hand side above is obtained from the isotropy representation ${\rm{Ad}} \colon P \to {\rm{GL}}(\mathfrak{m})$. From this, a straightforward computation shows us that 
\begin{equation}
\label{canonicalbundleflag}
K_{X_{P}}^{-1} = \det \big(T^{1,0}X_{P} \big) =\det \big ( G^{\mathbbm{C}} \times_{P} \mathfrak{m} \big )= L_{\chi_{\delta_{P}}},
\end{equation}
where $\det({\rm{Ad}}(g)) = \chi_{\delta_{P}}^{-1}(g)$, $\forall g \in P$, so $\det \circ {\rm{Ad}} = \chi_{\delta_{P}}^{-1}$. Hence, from the previous results we have 
\begin{equation}
\label{charactercanonical}
\chi_{\delta_{P}} = \displaystyle \prod_{\alpha \in \Sigma \backslash \Theta} \chi_{\varpi_{\alpha}}^{\langle \delta_{P},h_{\alpha}^{\vee} \rangle} \Longrightarrow \det \big(T^{1,0}X_{P} \big) = \bigotimes_{\alpha \in \Sigma \backslash \Theta}\mathscr{O}_{\alpha}(\ell_{\alpha}),
\end{equation}
such that $\ell_{\alpha} = \langle \delta_{P}, h_{\alpha}^{\vee} \rangle, \forall \alpha \in \Sigma \backslash \Theta$. In the above computation, notice that 
\begin{equation}
\delta_{P} = \sum_{\alpha \in \Pi^{+} \backslash \langle \Theta \rangle^{+} } \alpha.
\end{equation}
If we consider the invariant K\"{a}hler metric $\rho_{0} \in \Omega^{1,1}(X_{P})^{G}$, locally describe by
\begin{equation}
\label{riccinorm}
\rho_{0}|_{U} = \sum_{\alpha \in \Sigma \backslash \Theta}\langle \delta_{P}, h_{\alpha}^{\vee} \rangle \sqrt{-1} \partial \overline{\partial}\log \big (||s_{U}v_{\varpi_{\alpha}}^{+}||^{2}\big ),
\end{equation}
for some local section $s_{U} \colon U \subset X_{P} \to G^{\mathbbm{C}}$, it is straightforward to see that 
\begin{equation}
\label{ChernFlag}
c_{1}(X_{P}) = \Big [ \frac{\rho_{0}}{2\pi}\Big].
\end{equation}
By the uniqueness of $G$-invariant representative of $c_{1}(X_{P})$, it follows that 
\begin{center}
${\rm{Ric}}(\rho_{0}) = \rho_{0}$, 
\end{center}
i.e., $\rho_{0} \in \Omega^{1,1}(X_{P})^{G}$ defines a $G$-ivariant K\"{a}hler-Einstein metric on $X_{P}$ (cf. \cite{MATSUSHIMA}). 
\begin{remark}
\label{scalarcurvature}
From the uniqueness of the $G$-invariant representative for $c_{1}(X_{P})$, given any $G$-invariant K\"{a}hler metric $\omega_{\varphi}$, we have that ${\rm{Ric}}(\omega_{\varphi}) = \rho_{0}$. Therefore, the scalar curvature $S(\omega_{\varphi})$ of $\omega_{\varphi}$ is given by
\begin{equation}
S(\omega_{\varphi}) = \Lambda_{\omega_{\varphi}}({\rm{Ric}}(\omega_{\varphi})) = {\rm{tr}}_{\omega_{\varphi}}(\rho_{0}).
\end{equation}
Since $\rho_{0}$ is harmonic with respect to any $G$-invariant K\"{a}hler metric, we have that $S(\omega_{\varphi})$ is constant. 
\end{remark}

Given any two $G$-invariant K\"{a}hler metrics $\omega_{1}$ and $\omega_{2}$ on $X_{P}$, we have ${\rm{Ric}}(\omega_{1}) = {\rm{Ric}}(\omega_{2}) = \rho_{0}$. Thus, it follows that the smooth function $\frac{\det(\omega_{1})}{\det(\omega_{2})}$ is constant. Moreover, we have
\begin{equation}
{\rm{Vol}}(X_{P},\omega_{1})  = \frac{\det(\omega_{1})}{\det(\omega_{2})}{\rm{Vol}}(X_{P},\omega_{2}). 
\end{equation}
In particular, denoting $V_{0} = {\rm{deg}}(X_{P},-K_{X_{P}}) = (-K_{X_{P}})^{n}$, we can show from above the following result.
\begin{theorem}[Azad-Biswas, \cite{AZAD}]
\label{volumeflagform}
The volume of $X_{P}$ with respect to an arbitrary $G$-invariant K\"{a}hler metric $\omega = \sum_{\alpha \in \Sigma \backslash \Theta}c_{\alpha} \Omega_{\alpha}$, such that $c_{\alpha} > 0$, $\forall \alpha \in \Sigma \backslash \Theta$, is given by 
\begin{equation}
\label{volumeflag}
{\rm{Vol}}(X_{P},\omega) = \frac{V_{0}}{n!}\frac{\prod_{\gamma \in \Pi^{+} \backslash \langle \Theta \rangle^{+}}\Big [\sum_{\alpha \in \Sigma \backslash \Theta}c_{\alpha} \langle \varpi_{\alpha}, h_{\gamma}^{\vee} \rangle \Big ] }{\prod_{\gamma \in \Pi^{+} \backslash \langle \Theta \rangle^{+}} \Big [\sum_{\alpha \in \Sigma \backslash \Theta}\langle \delta_{P}, h_{\alpha}^{\vee} \rangle \langle \varpi_{\alpha}, h_{\gamma}^{\vee} \rangle \Big ]}.
\end{equation}
\end{theorem}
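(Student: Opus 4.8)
The plan follows the circle of ideas of Azad--Biswas \cite{AZAD}: the key input, already recorded in Remark~\ref{scalarcurvature}, is that \emph{every} $G$-invariant K\"{a}hler metric on $X_{P}$ has one and the same Ricci form $\rho_{0}$, and this collapses the computation of ${\rm{Vol}}(X_{P},\omega)$ to a single linear-algebra computation in the tangent space at the base point $x_{0}=eP$. First I would fix the reference metric $\omega_{0}:=\sum_{\alpha\in\Sigma\backslash\Theta}\langle\delta_{P},h_{\alpha}^{\vee}\rangle\,\Omega_{\alpha}$; by Eq.~(\ref{ChernFlag}) together with Proposition~\ref{C8S8.2Sub8.2.3P8.2.6}, $[\omega_{0}]=c_{1}(X_{P})$ is the integral class of $-K_{X_{P}}$, so $\int_{X_{P}}\omega_{0}^{n}/n!=(-K_{X_{P}})^{n}/n!=V_{0}/n!$. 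Since ${\rm{Ric}}(\omega)={\rm{Ric}}(\omega_{0})=\rho_{0}$ we get $\sqrt{-1}\partial\overline{\partial}\log\bigl(\det(\omega)/\det(\omega_{0})\bigr)={\rm{Ric}}(\omega_{0})-{\rm{Ric}}(\omega)=0$, and as $\det(\omega)/\det(\omega_{0})$ is a globally defined smooth positive function on the compact K\"{a}hler manifold $X_{P}$ it must be a constant $\lambda>0$. Hence $\omega^{n}=\lambda\,\omega_{0}^{n}$ as top forms and ${\rm{Vol}}(X_{P},\omega)=\lambda\,V_{0}/n!$; and because both metrics are $G$-invariant, $\lambda$ is simply the ratio of the Hermitian determinants they induce on $T^{1,0}_{x_{0}}X_{P}\cong\mathfrak{m}=\bigoplus_{\gamma\in\Pi^{+}\backslash\langle\Theta\rangle^{+}}\mathfrak{g}_{-\gamma}$.

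Next I would diagonalize the invariant forms $\Omega_{\alpha}$ at $x_{0}$. By Theorem~\ref{AZADBISWAS} one has $\pi^{\ast}\Omega_{\alpha}=\sqrt{-1}\partial\overline{\partial}\varphi_{\varpi_{\alpha}}$ with $\varphi_{\varpi_{\alpha}}(g)=\tfrac{1}{2\pi}\log\|gv_{\varpi_{\alpha}}^{+}\|^{2}$. The Hermitian form $\Omega_{\alpha}|_{x_{0}}$ is invariant under the isotropy action of the maximal torus $T\subset G\cap P$, and the root spaces $\mathfrak{g}_{-\gamma}$ carry pairwise distinct $T$-weights, so $\Omega_{\alpha}|_{x_{0}}$ is diagonal with respect to the root space decomposition of $\mathfrak{m}$. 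Computing the Hessian of $\log\|gv_{\varpi_{\alpha}}^{+}\|^{2}$ at $g=e$ in the direction $y_{-\gamma}\in\mathfrak{g}_{-\gamma}$, the mixed first-order terms drop out because $y_{-\gamma}v_{\varpi_{\alpha}}^{+}$ lies in the weight space of weight $\varpi_{\alpha}-\gamma\neq\varpi_{\alpha}$, and one is left with $\|y_{-\gamma}v_{\varpi_{\alpha}}^{+}\|^{2}/\|v_{\varpi_{\alpha}}^{+}\|^{2}$ up to a universal normalization constant. Evaluating this with the $\mathfrak{sl}_{2}$-triple $(x_{\gamma},y_{-\gamma},h_{\gamma})$, using $x_{\gamma}v_{\varpi_{\alpha}}^{+}=0$ and that $x_{\gamma}$ and $y_{-\gamma}$ are mutually adjoint up to sign for the fixed $G$-invariant inner product $\langle\cdot,\cdot\rangle_{\alpha}$ on $V(\varpi_{\alpha})$, gives $\|y_{-\gamma}v_{\varpi_{\alpha}}^{+}\|^{2}=\langle h_{\gamma}v_{\varpi_{\alpha}}^{+},v_{\varpi_{\alpha}}^{+}\rangle_{\alpha}=\varpi_{\alpha}(h_{\gamma})\|v_{\varpi_{\alpha}}^{+}\|^{2}$. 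Since $\varpi_{\alpha}(h_{\gamma})=\tfrac{1}{2}\kappa(h_{\gamma},h_{\gamma})\langle\varpi_{\alpha},h_{\gamma}^{\vee}\rangle$, the eigenvalue of $\Omega_{\alpha}|_{x_{0}}$ on $\mathfrak{g}_{-\gamma}$ equals $c_{\gamma}\langle\varpi_{\alpha},h_{\gamma}^{\vee}\rangle$, where $c_{\gamma}>0$ depends on $\gamma$ but not on $\alpha$.

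Finally, by linearity the eigenvalue of $\omega=\sum_{\alpha}c_{\alpha}\Omega_{\alpha}$ on $\mathfrak{g}_{-\gamma}$ is $c_{\gamma}\sum_{\alpha\in\Sigma\backslash\Theta}c_{\alpha}\langle\varpi_{\alpha},h_{\gamma}^{\vee}\rangle$, and that of $\omega_{0}$ is $c_{\gamma}\sum_{\alpha\in\Sigma\backslash\Theta}\langle\delta_{P},h_{\alpha}^{\vee}\rangle\langle\varpi_{\alpha},h_{\gamma}^{\vee}\rangle$; forming $\lambda=\det(\omega)/\det(\omega_{0})|_{x_{0}}$ as the product over $\gamma\in\Pi^{+}\backslash\langle\Theta\rangle^{+}$ of the ratios of these eigenvalues, the constants $c_{\gamma}$ cancel and $\lambda$ is precisely the quotient of products on the right-hand side of~(\ref{volumeflag}); together with ${\rm{Vol}}(X_{P},\omega)=\lambda\,V_{0}/n!$ this yields the formula. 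The only genuine computation is the local eigenvalue identification of the second paragraph, and the one point to watch is that each normalization constant entering there---from the $\partial\overline{\partial}$-convention, from $\langle\cdot,\cdot\rangle_{\alpha}$, and from $\tfrac{1}{2}\kappa(h_{\gamma},h_{\gamma})$---is independent of $\alpha$, so that it cancels in the determinant ratio and does not appear in the answer; a purely intersection-theoretic evaluation of $[\omega]^{n}$ is possible as well but much less transparent.
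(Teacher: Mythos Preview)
The paper does not actually prove this theorem; it is stated as a cited result of Azad--Biswas \cite{AZAD}. What the paper does provide, in the paragraph immediately preceding the statement, is the key reduction: since every $G$-invariant K\"{a}hler metric on $X_{P}$ has the same Ricci form $\rho_{0}$, the function $\det(\omega_{1})/\det(\omega_{2})$ is constant and hence ${\rm{Vol}}(X_{P},\omega_{1})=\frac{\det(\omega_{1})}{\det(\omega_{2})}{\rm{Vol}}(X_{P},\omega_{2})$. Your proposal takes precisely this route and completes it: you specialize $\omega_{2}$ to $\omega_{0}=\sum_{\alpha}\langle\delta_{P},h_{\alpha}^{\vee}\rangle\,\Omega_{\alpha}\in c_{1}(X_{P})$, whose volume is $V_{0}/n!$, and then compute the determinant ratio at $x_{0}$ by diagonalizing each $\Omega_{\alpha}$ on the root-space decomposition of $\mathfrak{m}$ via the $\mathfrak{sl}_{2}$-identity $x_{\gamma}y_{-\gamma}v_{\varpi_{\alpha}}^{+}=h_{\gamma}v_{\varpi_{\alpha}}^{+}$. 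This is correct and is exactly the argument the paper alludes to (and that Azad--Biswas carry out); your observation that the $\gamma$-dependent normalization constants cancel in the ratio is the essential bookkeeping point.
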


\begin{remark}
It is worth pointing out that the expression given in Eq. (\ref{volumeflag}) is slightly different from \cite{AZAD}. The reason for this is that we consider the volume of $X_{P}$ with respect to an arbitrary K\"{a}hler metric $\omega$ as being $\frac{1}{n!} \int_{X_{P}}\omega^{n}$, instead of $\int_{X_{P}}\omega^{n}$. Given a K\"{a}hler class $\xi \in \mathcal{K}(X_{P})$, we define the volume of $\xi$ as being
\begin{equation}
{\rm{Vol}}(\xi) := n! {\rm{Vol}}(X_{P},\omega), 
\end{equation}
for some $\omega \in \xi$. Thus, according to our convention, the formula presented in \cite{AZAD} for the volume of $X_{P}$ with respect to an arbitrary $G$-invariant K\"{a}hler metric $\omega$ corresponds to ${\rm{Vol}}([\omega])$.
\end{remark}

\section{Proof of main results}\label{main-results}

\begin{theorem}\label{main-thm}
Let $L \in {\rm{Pic}}(X_{P})$ and let $\beta \in c_{1}(L)$ be a $G$-invariant $(1,1)$-form. Then there exists a (unique) $G$-invariant K\"{a}hler metric $\omega$ on $X_{P}$, satisfying 
\begin{equation}
\label{TKE}
{\rm{Ric}}(\omega) = \omega + \beta,
\end{equation}
if, and only if,
\begin{equation}
\int_{\mathbbm{P}_{\alpha}^{1}}\beta < 2\pi\langle \delta_{P},h_{\alpha}^{\vee}  \rangle,
\end{equation}
$\forall \alpha \in \Sigma \backslash \Theta$, where $\mathbbm{P}_{\alpha}^{1} \subset X_{P}$, $\alpha \in \Sigma \backslash \Theta$, are generators of the cone of curves ${\rm{NE}}(X_{P})$. 
\end{theorem}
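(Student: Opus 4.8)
The plan is to reduce the nonlinear equation (\ref{TKE}) to a linear identity between $G$-invariant closed $(1,1)$-forms, and then to read off the solvability criterion from the Azad--Biswas positivity statement in Theorem \ref{AZADBISWAS}. The crucial input is Remark \ref{scalarcurvature}: \emph{every} $G$-invariant K\"ahler metric $\omega$ on $X_{P}$ has the same Ricci form, namely $\mathrm{Ric}(\omega)=\rho_{0}$, where $\rho_{0}$ is the unique $G$-invariant representative of $2\pi c_{1}(X_{P})$ written down in (\ref{riccinorm}). Hence, for a $G$-invariant K\"ahler metric $\omega$ the twisted K\"ahler--Einstein equation $\mathrm{Ric}(\omega)=\omega+\beta$ is \emph{equivalent} to the single identity $\omega=\rho_{0}-\beta$. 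In particular a solution, if it exists, is unique and is given explicitly by $\omega=\rho_{0}-\beta$, so the whole problem becomes: when is the $G$-invariant closed $(1,1)$-form $\rho_{0}-\beta$ positive, i.e.\ a K\"ahler metric?

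To decide this, I would expand $\rho_{0}$ and $\beta$ in the basis $\{[\Omega_{\alpha}]\}_{\alpha\in\Sigma\setminus\Theta}$ of $H^{2}(X_{P},\mathbbm{Z})$ furnished by Proposition \ref{C8S8.2Sub8.2.3P8.2.6}. Pairing with the generators $[\mathbbm{P}^{1}_{\alpha}]$ of the cone of curves $\mathrm{NE}(X_{P})$ and using $\int_{\mathbbm{P}^{1}_{\beta}}\Omega_{\alpha}=\delta_{\alpha\beta}$, one gets $c_{1}(L)=\sum_{\alpha}m_{\alpha}[\Omega_{\alpha}]$ with $m_{\alpha}=\int_{\mathbbm{P}^{1}_{\alpha}}\beta\in\mathbbm{Z}$; since a $G$-invariant closed real $(1,1)$-form on $X_{P}$ is determined by its cohomology class (recall that on $X_{P}$ the $G$-invariant closed $(1,1)$-forms are exactly the harmonic $2$-forms), this forces $\beta=\sum_{\alpha}m_{\alpha}\Omega_{\alpha}$. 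On the other hand, (\ref{charactercanonical}) together with (\ref{riccinorm})--(\ref{ChernFlag}) gives $\rho_{0}=2\pi\sum_{\alpha}\langle\delta_{P},h_{\alpha}^{\vee}\rangle\,\Omega_{\alpha}$. Therefore
\[
\rho_{0}-\beta=\sum_{\alpha\in\Sigma\setminus\Theta}\Big(2\pi\langle\delta_{P},h_{\alpha}^{\vee}\rangle-\int_{\mathbbm{P}^{1}_{\alpha}}\beta\Big)\Omega_{\alpha}.
\]

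Finally, the last assertion of Theorem \ref{AZADBISWAS} states that a $G$-invariant closed $(1,1)$-form $\sum_{\alpha}c_{\alpha}\Omega_{\alpha}$ is a K\"ahler metric if and only if $c_{\alpha}>0$ for all $\alpha\in\Sigma\setminus\Theta$. Applying this to $\omega=\rho_{0}-\beta$ shows that the unique candidate solution is actually a K\"ahler metric exactly when $\int_{\mathbbm{P}^{1}_{\alpha}}\beta<2\pi\langle\delta_{P},h_{\alpha}^{\vee}\rangle$ for every $\alpha\in\Sigma\setminus\Theta$; this yields at once existence, uniqueness, and the necessity of the condition. I do not anticipate a genuine obstacle once the dictionary of Section \ref{generalities} is in place; the only points demanding care are the bookkeeping of the $2\pi$ normalizations in (\ref{riccinorm})--(\ref{ChernFlag}) and the identification of $G$-invariant closed $(1,1)$-forms with their harmonic representatives, which is what lets one both pin down $\beta=\sum_{\alpha}m_{\alpha}\Omega_{\alpha}$ and promote the cohomological identity to the form-level identity $\omega=\rho_{0}-\beta$.
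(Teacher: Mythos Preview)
Your proposal is correct and follows essentially the same line as the paper's proof: both reduce the problem via $\mathrm{Ric}(\omega)=\rho_{0}$ to the identity $\omega=\rho_{0}-\beta$, expand in the basis $\{\Omega_{\alpha}\}$, and invoke the positivity criterion of Theorem~\ref{AZADBISWAS}. Your presentation is slightly more explicit about why the $G$-invariant form $\beta$ coincides with $\sum_{\alpha}m_{\alpha}\Omega_{\alpha}$ at the level of forms (not just in cohomology), which the paper uses implicitly when it says ``it is straightforward to verify that $\mathrm{Ric}(\omega)=\omega+\beta$''.
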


\begin{proof}
Given $L \in {\rm{Pic}}(X_{P})$, it follows that 
\begin{equation}
L = \bigotimes_{\alpha \in \Sigma \backslash \Theta} \mathscr{O}_{\alpha}(\ell_{\alpha}),
\end{equation}
such that $\ell_{\alpha} \in \mathbbm{Z}$, $\forall \alpha \in \Sigma \backslash \Theta$. If $\omega$ is a $G$-invariant K\"{a}hler metric on $X_{P}$ satisfying Eq. (\ref{TKE}), it follows that 
\begin{equation}
2\pi c_{1}(X_{P}) = [\omega] + c_{1}(L) .
\end{equation}
Hence, we have 
\begin{equation}
[\omega] = \sum_{\alpha \in \Sigma \backslash \Theta}\big ( 2 \pi \langle \delta_{P},h_{\alpha}^{\vee}  \rangle - \ell_{\alpha}\big )c_{1}(\mathscr{O}_{\alpha}(1)).
\end{equation}
Since $\omega$ is a positive real $(1,1)$-form, it follows that $2 \pi \langle \delta_{P},h_{\alpha}^{\vee}  \rangle - \ell_{\alpha} > 0$, $\forall \alpha \in \Sigma \backslash \Theta$. Thus, we conclude that 
\begin{equation}
\int_{\mathbbm{P}_{\alpha}^{1}}\beta = \ell_{\alpha} < 2 \pi \langle \delta_{P},h_{\alpha}^{\vee}  \rangle, 
\end{equation}
for all $\alpha \in \Sigma \backslash \Theta$. On the other hand, given a $G$-invariant $(1,1)$-form $\beta \in c_{1}(L)$, such that $\int_{\mathbbm{P}_{\alpha}^{1}}\beta < 2\pi\langle \delta_{P},h_{\alpha}^{\vee}  \rangle$, $\forall \alpha \in \Sigma \backslash \Theta$, we set 
\begin{equation}
\omega := \sum_{\alpha \in \Sigma \backslash \Theta}\bigg ( 2 \pi \langle \delta_{P},h_{\alpha}^{\vee}  \rangle - \int_{\mathbbm{P}_{\alpha}^{1}}\beta \bigg ) \Omega_{\alpha}.
\end{equation}
By the above definition, we have that $\omega$ defines a $G$-invariant K\"{a}hler metric on $X_{P}$. Moreover, it is straightforward to verify that ${\rm{Ric}}(\omega) = \omega + \beta$. The uniqueness of $\omega$ follows from the fact that it is $G$-invariant. 
\end{proof}

In the setting of the theorem above, if we replace $c_{1}(L)$ by $2\pi \big (c_{1}(X_{P}) - \xi)$, for some K\"{a}hler class $\xi$ on $X_{P}$, observing that every $\beta \in 2\pi \big (c_{1}(X_{P}) - \xi)$ satisfies
\begin{equation}
\int_{\mathbbm{P}_{\alpha}^{1}}\beta = 2\pi \big \langle c_{1}(X_{P}) - \xi,\mathbbm{P}_{\alpha}^{1} \big \rangle < 2\pi \int_{\mathbbm{P}_{\alpha}^{1}}c_{1}(X_{P}) = 2 \pi \langle \delta_{P},h_{\alpha}^{\vee}  \rangle,
\end{equation}
for all $\alpha \in \Sigma \backslash \Theta$, from Theorem \ref{main-thm} we obtain the following corollary.

\begin{corollary}
\label{CorollarytwistedKE}
Given a K\"{a}hler class $\xi$ on $X_{P}$ and a $G$-invariant $(1,1)$-form $ \beta \in 2\pi \big ( c_{1}(X_{P}) - \xi \big)$, then there exist a unique $G$-invariant K\"{a}hler metric $\omega \in 2\pi \xi$, such that 
\begin{equation}
\label{twistedKE}
{\rm{Ric}}(\omega) = \omega + \beta.
\end{equation}
\end{corollary}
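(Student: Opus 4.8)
The plan is to run the argument of Theorem~\ref{main-thm} with real coefficients, the only novelty being that $2\pi\big(c_{1}(X_{P})-\xi\big)$ need not be the first Chern class of an honest line bundle. First I would write $\xi=\sum_{\alpha\in\Sigma\backslash\Theta}a_{\alpha}[\Omega_{\alpha}]$ with $a_{\alpha}>0$ for all $\alpha\in\Sigma\backslash\Theta$, using the description of the K\"{a}hler cone $\mathcal{K}(X_{P})=\bigoplus_{\alpha\in\Sigma\backslash\Theta}\mathbbm{R}^{+}[\Omega_{\alpha}]$, and recall from Eq.~(\ref{ChernFlag})--(\ref{charactercanonical}) that $c_{1}(X_{P})=\sum_{\alpha\in\Sigma\backslash\Theta}\langle\delta_{P},h_{\alpha}^{\vee}\rangle[\Omega_{\alpha}]$, equivalently $\rho_{0}=2\pi\sum_{\alpha\in\Sigma\backslash\Theta}\langle\delta_{P},h_{\alpha}^{\vee}\rangle\,\Omega_{\alpha}$. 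Since $\beta$ is $G$-invariant and every class in $H^{2}(X_{P},\mathbbm{R})$ has a unique $G$-invariant representative (the identification $\mathscr{I}_{G}^{1,1}(X_{P})=\mathscr{H}^{2}(X_{P},g)$ following Proposition~\ref{C8S8.2Sub8.2.3P8.2.6}), this forces $\beta=2\pi\sum_{\alpha\in\Sigma\backslash\Theta}\big(\langle\delta_{P},h_{\alpha}^{\vee}\rangle-a_{\alpha}\big)\Omega_{\alpha}$; pairing with the generators $[\mathbbm{P}_{\alpha}^{1}]$ of ${\rm{NE}}(X_{P})$ and using $\langle[\Omega_{\beta}],[\mathbbm{P}_{\alpha}^{1}]\rangle=\delta_{\alpha\beta}$ yields $\int_{\mathbbm{P}_{\alpha}^{1}}\beta=2\pi\big(\langle\delta_{P},h_{\alpha}^{\vee}\rangle-a_{\alpha}\big)<2\pi\langle\delta_{P},h_{\alpha}^{\vee}\rangle$, precisely because $a_{\alpha}>0$.

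Next I would exhibit the solution explicitly: put $\omega:=2\pi\sum_{\alpha\in\Sigma\backslash\Theta}a_{\alpha}\,\Omega_{\alpha}$, which by Theorem~\ref{AZADBISWAS} is a $G$-invariant K\"{a}hler form (all coefficients are positive) and lies in $2\pi\xi$. To verify ${\rm{Ric}}(\omega)=\omega+\beta$ I would invoke the fact recorded in Remark~\ref{scalarcurvature} (cf.~\cite{MATSUSHIMA}) that the Ricci form of any $G$-invariant K\"{a}hler metric on $X_{P}$ equals the canonical form $\rho_{0}$; then
\[
\omega+\beta=2\pi\sum_{\alpha\in\Sigma\backslash\Theta}a_{\alpha}\,\Omega_{\alpha}+2\pi\sum_{\alpha\in\Sigma\backslash\Theta}\big(\langle\delta_{P},h_{\alpha}^{\vee}\rangle-a_{\alpha}\big)\Omega_{\alpha}=\rho_{0}={\rm{Ric}}(\omega).
\]
Uniqueness within the class of $G$-invariant metrics is then immediate, once more because $2\pi\xi$ has a unique $G$-invariant representative.

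I do not expect a genuine obstacle here: the statement is essentially bookkeeping on top of Theorem~\ref{main-thm}. The single point deserving care is that $2\pi\big(c_{1}(X_{P})-\xi\big)$ is only a real cohomology class, so Theorem~\ref{main-thm} cannot be cited verbatim; instead one observes that its proof uses only that the relevant classes are spanned by the $[\Omega_{\alpha}]$ together with the positivity test $c_{\alpha}>0$, and these persist with real coefficients. Equivalently, one checks the inequality $\int_{\mathbbm{P}_{\alpha}^{1}}\beta<2\pi\langle\delta_{P},h_{\alpha}^{\vee}\rangle$ by hand, as above, from $\langle\xi,[\mathbbm{P}_{\alpha}^{1}]\rangle=a_{\alpha}>0$ and $\int_{\mathbbm{P}_{\alpha}^{1}}c_{1}(X_{P})=\langle\delta_{P},h_{\alpha}^{\vee}\rangle$.
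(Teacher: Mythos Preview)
Your proposal is correct and follows essentially the same approach as the paper: verify the inequality $\int_{\mathbbm{P}_{\alpha}^{1}}\beta<2\pi\langle\delta_{P},h_{\alpha}^{\vee}\rangle$ from the positivity $\langle\xi,[\mathbbm{P}_{\alpha}^{1}]\rangle>0$, then invoke (the argument of) Theorem~\ref{main-thm}. You are in fact more careful than the paper on one point, explicitly noting that Theorem~\ref{main-thm} is stated for integral classes $c_{1}(L)$ whereas $2\pi(c_{1}(X_{P})-\xi)$ is only real, and that the proof nonetheless goes through verbatim with real coefficients.
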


By taking the trace with respect to $\omega$ in Eq. (\ref{twistedKE}), we obtain the following result.

\begin{corollary}
Given a K\"{a}hler class $\xi$ on $X_{P}$ and a $G$-invariant $(1,1)$-form $ \beta \in 2\pi \big ( c_{1}(X_{P}) - \xi \big)$, then there exists a (unique) $G$-invariant K\"{a}hler metric $\omega \in 2\pi \xi$ with constant $\beta$-twisted scalar curvature, such that
\begin{equation}
S(\omega) - \Lambda_{\omega}(\beta) = \dim_{\mathbbm{C}}(X_{P}),
\end{equation}
where $S(\omega)$ denotes the Chern scalar curvature of $\omega$.
\end{corollary}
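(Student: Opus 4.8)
The plan is to obtain the metric from Corollary \ref{CorollarytwistedKE} and then simply trace the twisted K\"{a}hler--Einstein equation with respect to it. First, I would apply Corollary \ref{CorollarytwistedKE} to the pair $(\xi,\beta)$: since $\beta$ is a $G$-invariant $(1,1)$-form in $2\pi\big(c_{1}(X_{P})-\xi\big)$, there is a (unique) $G$-invariant K\"{a}hler metric $\omega \in 2\pi\xi$ satisfying ${\rm Ric}(\omega)=\omega+\beta$. This $\omega$ is the candidate; it remains to check that it has constant $\beta$-twisted scalar curvature equal to $\dim_{\mathbbm{C}}(X_{P})$.

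Second, I would apply the contraction $\Lambda_{\omega}$ to both sides of the pointwise identity ${\rm Ric}(\omega)=\omega+\beta$. In the K\"{a}hler setting the Chern scalar curvature is, by definition, $S(\omega)=\Lambda_{\omega}\big({\rm Ric}(\omega)\big)$ (this is the convention recorded in Remark \ref{scalarcurvature}), while $\Lambda_{\omega}(\omega)=\dim_{\mathbbm{C}}(X_{P})=:n$ because $\omega$ is a K\"{a}hler form on an $n$-dimensional complex manifold. Hence $S(\omega)=n+\Lambda_{\omega}(\beta)$, i.e. $S(\omega)-\Lambda_{\omega}(\beta)=n$ identically on $X_{P}$; in particular this function is the constant $n$, so $\omega$ is a $\beta$-twisted cscK metric with twisted scalar curvature $\dim_{\mathbbm{C}}(X_{P})$.

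Third, for uniqueness I would use that every class in $H^{1,1}(X_{P},\mathbbm{R})$ has a unique $G$-invariant representative --- namely the one harmonic with respect to any $G$-invariant metric, by Proposition \ref{C8S8.2Sub8.2.3P8.2.6} together with the identification of closed invariant $(1,1)$-forms with harmonic $2$-forms --- so $2\pi\xi$ contains exactly one $G$-invariant K\"{a}hler metric, which must be the $\omega$ above; alternatively one may cite the uniqueness of twisted cscK metrics in a fixed K\"{a}hler class from \cite[Theorem 4.5]{BB}. I do not expect any genuine obstacle here: the whole argument is the one-line trace computation, and the only points needing care are the identity $\Lambda_{\omega}\big({\rm Ric}(\omega)\big)=S(\omega)$ and the normalization $\Lambda_{\omega}(\omega)=n$; note also that constancy of $S(\omega)-\Lambda_{\omega}(\beta)$ comes for free from the pointwise equation, without invoking a priori constancy of $S(\omega)$ or semipositivity of $\beta$.
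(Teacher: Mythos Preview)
Your proposal is correct and follows exactly the paper's approach: obtain the $G$-invariant tKE metric $\omega$ from Corollary \ref{CorollarytwistedKE} and then trace the equation ${\rm Ric}(\omega)=\omega+\beta$ with respect to $\omega$. The paper states this as a single sentence before the corollary; your added remarks on $\Lambda_{\omega}(\omega)=n$, $S(\omega)=\Lambda_{\omega}({\rm Ric}(\omega))$, and uniqueness via the unique $G$-invariant representative are all sound elaborations of that one-line argument.
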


From the Corollary \ref{CorollarytwistedKE} we can show the following.

\begin{corollary}
Let $R \colon \mathcal{K}(X_{P}) \to \mathbbm{R}$, such that $R(\xi)$ is the greatest Ricci lower bound of $\xi \in \mathcal{K}(X_{P})$. Then, we have
\begin{equation}
R(\xi) = \min \bigg \{ \frac{\langle \delta_{P},h_{\alpha}^{\vee}  \rangle}{a_{\alpha}} \ \bigg | \  \alpha \in \Sigma \backslash \Theta \bigg \},
\end{equation}
such that $a_{\alpha} = \langle \xi, [\mathbbm{P}_{\alpha}^{1}]\rangle$, $\forall \alpha \in \Sigma \backslash \Theta$.
\end{corollary}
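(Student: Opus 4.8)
The plan is to reduce the computation of $R(\xi)$ to the twisted K\"{a}hler-Einstein existence criterion of Corollary \ref{CorollarytwistedKE}, exploiting the fact that on $X_{P}$ the only $G$-invariant representative of $c_{1}(X_{P})$ is $\rho_{0}/2\pi$ and that the Ricci form of \emph{every} $G$-invariant K\"{a}hler metric equals $\rho_{0}$ (Remark \ref{scalarcurvature}). First I would reformulate $R(\xi)$ via the equivalence
\begin{equation}
R(\xi) = \sup\{ r \in \mathbbm{R} \ | \ \exists \ \omega \in 2\pi\xi \ \text{K\"{a}hler with} \ {\rm{Ric}}(\omega) - r\omega \geq 0 \},
\end{equation}
and observe that for $r < 1$ the inequality ${\rm{Ric}}(\omega) \geq r\omega$ is equivalent to the existence of a K\"{a}hler metric solving a twisted K\"{a}hler-Einstein type equation: writing $\beta := {\rm{Ric}}(\omega) - r\omega$, which is a closed semipositive $(1,1)$-form with $[\beta] = 2\pi c_{1}(X_{P}) - r\cdot 2\pi\xi$, the condition becomes exactly that $\omega' := r\omega$ satisfies ${\rm{Ric}}(\omega') = \omega' + \beta$ with $\beta \geq 0$ in the class $2\pi(c_{1}(X_{P}) - r\xi)$. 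Conversely, a solution of this twisted equation with semipositive $\beta$ gives ${\rm{Ric}}(r^{-1}\omega') \geq r^{-1}\omega' \cdot r = \omega'$, i.e. the desired Ricci lower bound. (One must be slightly careful: the existence criterion in Corollary \ref{CorollarytwistedKE} is stated for $\beta$ in a class of the form $2\pi(c_1 - \xi')$, and here $\xi' = r\xi$ is K\"{a}hler precisely when $r > 0$, so the argument naturally produces the sup over positive $r$; the case $r \le 0$ is trivial since $\rho_0 > 0$.)

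The second step is to pin down, for a fixed $r \in (0,1)$, exactly when $r\xi$ is such that a semipositive $G$-invariant $\beta \in 2\pi(c_1(X_P) - r\xi)$ admits an associated tKE metric. By averaging, we may assume $\beta$ is $G$-invariant, hence (by the Harmonic 2-forms remark) harmonic, hence of the form $\beta = \sum_{\alpha \in \Sigma\setminus\Theta} b_\alpha \Omega_\alpha$ with $b_\alpha = \int_{\mathbbm{P}^1_\alpha}\beta$, and $[\beta]/2\pi = c_1(X_P) - r\xi$ forces $b_\alpha = 2\pi(\langle\delta_P, h_\alpha^\vee\rangle - r a_\alpha)$, where $a_\alpha = \langle \xi, [\mathbbm{P}^1_\alpha]\rangle$. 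Then Corollary \ref{CorollarytwistedKE} (or directly Theorem \ref{main-thm}) applies whenever this class $c_1(X_P) - r\xi$ is represented by \emph{some} semipositive invariant form; but since an invariant form $\sum b_\alpha\Omega_\alpha$ is semipositive iff all $b_\alpha \geq 0$, the relevant range of $r$ is exactly $\{r > 0 : \langle\delta_P,h_\alpha^\vee\rangle - ra_\alpha \geq 0 \ \forall \alpha\}$, equivalently $0 < r \leq \min_\alpha \langle\delta_P,h_\alpha^\vee\rangle/a_\alpha$. For any such $r$, Theorem \ref{main-thm} (whose inequality $b_\alpha < 2\pi\langle\delta_P,h_\alpha^\vee\rangle$ holds automatically since $ra_\alpha > 0$) produces an invariant K\"{a}hler $\omega \in 2\pi(r\xi)$ with ${\rm{Ric}}(\omega) = \omega + \beta$, i.e. ${\rm{Ric}}(r^{-1}\omega) \geq r^{-1}\omega \cdot$ hmm — more precisely ${\rm{Ric}}(\omega) = \omega + \beta \geq \omega$, and rescaling $\tilde\omega = r^{-1}\omega \in 2\pi\xi$ gives ${\rm{Ric}}(\tilde\omega) = {\rm{Ric}}(\omega) \geq \omega = r\tilde\omega$. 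Hence $R(\xi) \geq \min_\alpha \langle\delta_P,h_\alpha^\vee\rangle/a_\alpha$.

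For the reverse inequality I would argue by testing against the curves $\mathbbm{P}^1_\alpha$. If $\omega \in 2\pi\xi$ is any K\"{a}hler metric (not necessarily invariant) with ${\rm{Ric}}(\omega) \geq r\omega$, then integrating over $\mathbbm{P}^1_\alpha$ gives $2\pi\langle\delta_P, h_\alpha^\vee\rangle = \int_{\mathbbm{P}^1_\alpha}{\rm{Ric}}(\omega) \geq r\int_{\mathbbm{P}^1_\alpha}\omega = 2\pi r a_\alpha$ — here I use that $[{\rm{Ric}}(\omega)/2\pi] = c_1(X_P)$ and $\langle c_1(X_P),[\mathbbm{P}^1_\alpha]\rangle = \langle\delta_P,h_\alpha^\vee\rangle$ from Eq. (\ref{charactercanonical}). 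Thus $r \leq \langle\delta_P,h_\alpha^\vee\rangle/a_\alpha$ for every $\alpha \in \Sigma\setminus\Theta$, so $r \leq \min_\alpha\langle\delta_P,h_\alpha^\vee\rangle/a_\alpha$, and taking the supremum over admissible $r$ gives $R(\xi) \leq \min_\alpha\langle\delta_P,h_\alpha^\vee\rangle/a_\alpha$. Combining the two inequalities yields the claimed formula.

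The main obstacle, and the point needing the most care, is the passage between "${\rm{Ric}}(\omega) \geq r\omega$" and "there is a genuine twisted K\"{a}hler-Einstein metric in the appropriate class", i.e. showing that the \emph{supremum} $R(\xi)$ is governed by the \emph{closed} condition $b_\alpha \geq 0$ rather than the open one $b_\alpha > 0$ appearing in Theorem \ref{main-thm}. The clean way around this is precisely the rescaling trick above: for any $r$ strictly below the minimum we get a strict tKE solution and hence ${\rm{Ric}}(\tilde\omega) \geq r\tilde\omega$, so $R(\xi)$ is at least the minimum; the upper bound comes from the elementary cohomological pairing with $[\mathbbm{P}^1_\alpha]$ and needs no PDE input at all. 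A secondary technical point is justifying that a lower Ricci bound can always be achieved by a \emph{$G$-invariant} metric without loss of generality — this follows by averaging $\omega$ over the compact group $G$ (which preserves both the K\"{a}hler class and, by naturality of Ric, the inequality ${\rm{Ric}}(\omega) \geq r\omega$ after averaging, using convexity), so one may restrict to invariant metrics throughout the lower-bound construction.
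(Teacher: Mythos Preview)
Your argument is correct and, for the lower bound $R(\xi) \geq \min_{\alpha}\langle\delta_{P},h_{\alpha}^{\vee}\rangle/a_{\alpha}$, it is essentially the paper's own proof: pick $s$ just below (or equal to) the minimum, invoke Corollary~\ref{CorollarytwistedKE} to produce a $G$-invariant tKE metric in $2\pi s\xi$, and rescale by $s^{-1}$ to land in $2\pi\xi$ with ${\rm Ric}\geq s\cdot(\text{metric})$. Where you genuinely differ is in the upper bound. The paper's written proof simply stops after showing that every $s\leq R_{0}$ lies in the admissible set and declares $R_{0}=R(\xi)$; the inequality $R(\xi)\leq R_{0}$ is never argued. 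Your curve-pairing step---integrating ${\rm Ric}(\omega)-r\omega\geq 0$ over each generator $\mathbbm{P}^{1}_{\alpha}$ to force $r\leq\langle\delta_{P},h_{\alpha}^{\vee}\rangle/a_{\alpha}$---supplies exactly this missing direction, works for arbitrary (not necessarily invariant) K\"{a}hler metrics, and needs no analysis. This is a cleaner and more complete treatment than the paper's.

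One caveat: your final paragraph about averaging over $G$ should be dropped. The claim that ${\rm Ric}(\omega)\geq r\omega$ survives averaging ``by naturality of Ric \ldots\ using convexity'' is not correct---Ricci curvature is nonlinear in the metric, and there is no convexity statement that yields ${\rm Ric}(\int_{G}g^{\ast}\omega)\geq r\int_{G}g^{\ast}\omega$ from the pointwise inequalities for each $g^{\ast}\omega$. Fortunately you do not need it: your upper-bound argument via $\int_{\mathbbm{P}^{1}_{\alpha}}$ already handles arbitrary metrics, and the lower-bound construction is explicit and produces an invariant metric directly. Deleting the averaging remark leaves the proof intact.
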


\begin{proof}
Given $\xi \in \mathcal{K}(X_{P})$, we have $\xi = \sum_{\alpha \in  \Sigma \backslash \Theta}a_{\alpha}[\Omega_{\alpha}]$, with $a_{\alpha} > 0$, $\forall \alpha \in  \Sigma \backslash \Theta$. Let us denote 
\begin{center}
$\displaystyle R_{0} = \min \bigg \{ \frac{\langle \delta_{P},h_{\alpha}^{\vee}  \rangle}{a_{\alpha}} \ \bigg | \  \alpha \in \Sigma \backslash \Theta \bigg \}$.
\end{center}
For any $\epsilon > 0$, we can always choose $s > 0$, such that $R_{0} - \epsilon < s \leq R_{0}$. By taking a $G$-invariant $(1,1)$-form $ \beta \in 2\pi \big ( c_{1}(X_{P}) - s\xi \big)$, it follows from Corollary \ref{CorollarytwistedKE} that there exist a unique $G$-invariant tKE metric $\omega \in 2\pi s \xi$. From this, we have 
\begin{center}
${\rm{Ric}}(\frac{\omega}{s}) = {\rm{Ric}}(\omega) = \omega + \beta = s \big (\frac{\omega}{s} \big) + \beta \Rightarrow {\rm{Ric}}(\frac{\omega}{s}) - s \big (\frac{\omega}{s} \big) = \beta$.
\end{center}
Since $s \leq R_{0} \Rightarrow a_{\alpha}s \leq \langle \delta_{P},h_{\alpha}^{\vee}  \rangle$, $\forall \alpha \in  \Sigma \backslash \Theta$, we have
\begin{center}
$\displaystyle c_{1}(X_{P}) - s\xi = \sum_{\alpha \in  \Sigma \backslash \Theta}(\langle \delta_{P},h_{\alpha}^{\vee}  \rangle - sa_{\alpha})[\Omega_{\alpha}] \geq 0$,
\end{center}
thus ${\rm{Ric}}(\frac{\omega}{s}) - s \big (\frac{\omega}{s} \big) = \beta \geq 0$, with $\frac{\omega}{s} \in 2\pi \xi$. From this, we obtain
\begin{center}
$s \in \{ r \in \mathbbm{R} \ | \ \exists \ {\text{K\"{a}hler form}} \ \omega \in 2\pi \xi, \ {\text{s.t.}} \ {\rm{Ric}}(\omega) \geq r \omega  \}$.
\end{center}
Therefore, by definition of $R(\xi)$, we conclude that $R_{0} = R(\xi)$.
\end{proof}

From the description provided above for the greatest Ricci lower bound, we can prove the following result using essentially tools from Lie theory.

\begin{theorem}
For every $\xi \in \mathcal{K}(X_{P})$, the following inequalities hold
\begin{equation}
R(\xi)^{n}{\rm{Vol}}(\xi) \leq (-K_{X_{P}})^{n} \leq (n+1)^{n},
\end{equation}
such that $n = \dim_{\mathbbm{C}}(X_{P})$.
\end{theorem}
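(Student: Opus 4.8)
The plan is to prove the two inequalities separately, working entirely with the combinatorial/Lie-theoretic data. For the right-hand inequality $(-K_{X_P})^n \leq (n+1)^n$, I would simply invoke the result of Snow \cite{Snow}, which establishes exactly this for flag varieties; since the excerpt explicitly says we are allowed to use \cite{Snow}, this step costs nothing. So the real content is the left-hand inequality $R(\xi)^n \, {\rm Vol}(\xi) \leq (-K_{X_P})^n$ for every $\xi \in \mathcal{K}(X_P)$.

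For this, I would start from the explicit formulas already in the excerpt. Write $\xi = \sum_{\alpha \in \Sigma \setminus \Theta} a_\alpha [\Omega_\alpha]$ with $a_\alpha > 0$, and recall from the Corollary on the greatest Ricci lower bound that $R(\xi) = \min_\alpha \frac{\langle \delta_P, h_\alpha^\vee\rangle}{a_\alpha}$. Set $R_0 = R(\xi)$. The key observation is that $R_0 a_\alpha \leq \langle \delta_P, h_\alpha^\vee\rangle$ for every $\alpha \in \Sigma \setminus \Theta$, i.e. the class $R_0 \xi$ is "dominated termwise" by the anticanonical class $c_1(X_P) = \sum_\alpha \langle \delta_P, h_\alpha^\vee\rangle [\Omega_\alpha]$. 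Now apply the Azad--Biswas volume formula (Theorem \ref{volumeflagform}): using ${\rm Vol}(\xi) = n!\,{\rm Vol}(X_P,\omega)$ for $\omega = \sum_\alpha a_\alpha \Omega_\alpha$, one gets
\begin{equation}
R_0^n \, {\rm Vol}(\xi) = V_0 \cdot \frac{\prod_{\gamma \in \Pi^+ \setminus \langle\Theta\rangle^+} \left[ \sum_{\alpha \in \Sigma \setminus \Theta} R_0 a_\alpha \langle \varpi_\alpha, h_\gamma^\vee\rangle \right]}{\prod_{\gamma \in \Pi^+ \setminus \langle\Theta\rangle^+} \left[ \sum_{\alpha \in \Sigma \setminus \Theta} \langle \delta_P, h_\alpha^\vee\rangle \langle \varpi_\alpha, h_\gamma^\vee\rangle \right]},
\end{equation}
where $V_0 = (-K_{X_P})^n$ and $n = |\Pi^+ \setminus \langle\Theta\rangle^+|$. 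Since $\langle \varpi_\alpha, h_\gamma^\vee\rangle \geq 0$ for all $\alpha \in \Sigma \setminus \Theta$ and all $\gamma \in \Pi^+ \setminus \langle\Theta\rangle^+$ (each such $\gamma$ has nonnegative coefficients on the simple roots, hence pairs nonnegatively with fundamental coweights), the inequality $R_0 a_\alpha \leq \langle \delta_P, h_\alpha^\vee\rangle$ forces each numerator factor to be $\leq$ the corresponding denominator factor. Hence the whole fraction is $\leq 1$, giving $R_0^n\,{\rm Vol}(\xi) \leq V_0 = (-K_{X_P})^n$, as desired.

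The main obstacle — and the point requiring care — is verifying the positivity statement $\langle \varpi_\alpha, h_\gamma^\vee\rangle \geq 0$ for $\gamma \in \Pi^+ \setminus \langle\Theta\rangle^+$, together with checking that $n = \dim_{\mathbbm{C}} X_P$ really equals $|\Pi^+ \setminus \langle\Theta\rangle^+|$ (this is standard: $\dim X_P = \dim \mathfrak{m}$). The positivity follows because $h_\gamma^\vee$ is a nonnegative integral combination of the simple coroots $h_{\beta}^\vee$, $\beta \in \Sigma$ — more precisely one must use that $\langle \varpi_\alpha, h_\beta^\vee\rangle = \delta_{\alpha\beta} \geq 0$, and that writing $\gamma^\vee$ in terms of simple coroots yields nonnegative coefficients when $\gamma$ is a positive root (a basic fact about root systems). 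One should also note the denominator factors are strictly positive (the anticanonical class is ample, so $\rho_0$ is Kähler), so no division-by-zero issue arises. Everything else is a direct manipulation of the closed-form volume expression, requiring no analysis.
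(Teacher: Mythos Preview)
Your proposal is correct and follows essentially the same approach as the paper: both use the explicit formula for $R(\xi)$ to get $R(\xi)a_\alpha \leq \langle \delta_P, h_\alpha^\vee\rangle$, plug into the Azad--Biswas volume formula, compare numerator and denominator factor by factor, and then cite \cite{Snow} for the second inequality. Your write-up is in fact slightly more careful than the paper's, since you make explicit the nonnegativity $\langle \varpi_\alpha, h_\gamma^\vee\rangle \geq 0$ (needed for the termwise comparison) and the identification $n = |\Pi^+ \setminus \langle\Theta\rangle^+|$, both of which the paper leaves implicit.
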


\begin{proof}
Given $\xi \in \mathcal{K}(X_{P})$, it follows that $\xi = \sum_{\alpha \in  \Sigma \backslash \Theta}a_{\alpha}[\Omega_{\alpha}]$, with $a_{\alpha} > 0$, $\forall \alpha \in  \Sigma \backslash \Theta$. By considering $\omega =  \sum_{\alpha \in  \Sigma \backslash \Theta}a_{\alpha}\Omega_{\alpha}$, we obtain from Theorem \ref{volumeflag} the following
\begin{center}
${\rm{Vol}}(\xi) = n!{\rm{Vol}}(X_{P},\omega) = \displaystyle {V_{0}\frac{\prod_{\gamma \in \Pi^{+} \backslash \langle \Theta \rangle^{+}}\Big [\sum_{\alpha \in \Sigma \backslash \Theta}a_{\alpha} \langle \varpi_{\alpha}, h_{\gamma}^{\vee} \rangle \Big ] }{\prod_{\gamma \in \Pi^{+} \backslash \langle \Theta \rangle^{+}} \Big [\sum_{\alpha \in \Sigma \backslash \Theta}\langle \delta_{P}, h_{\alpha}^{\vee} \rangle \langle \varpi_{\alpha}, h_{\gamma}^{\vee} \rangle \Big ]}}.$
\end{center}
Since $R(\xi) \leq \frac{\langle \delta_{P},h_{\alpha}^{\vee}  \rangle}{a_{\alpha}}, \forall \alpha \in \Sigma \backslash \Theta$, it follows that $a_{\alpha} R(\xi) \leq \langle \delta_{P},h_{\alpha}^{\vee}  \rangle$, $\forall \alpha \in \Sigma \backslash \Theta$. Therefore, we obtain
\begin{equation}
R(\xi)^{n}{\rm{Vol}}(\xi) = \displaystyle {V_{0}\frac{\prod_{\gamma \in \Pi^{+} \backslash \langle \Theta \rangle^{+}}\Big [\sum_{\alpha \in \Sigma \backslash \Theta}a_{\alpha}R(\xi) \langle \varpi_{\alpha}, h_{\gamma}^{\vee} \rangle \Big ] }{\prod_{\gamma \in \Pi^{+} \backslash \langle \Theta \rangle^{+}} \Big [\sum_{\alpha \in \Sigma \backslash \Theta}\langle \delta_{P}, h_{\alpha}^{\vee} \rangle \langle \varpi_{\alpha}, h_{\gamma}^{\vee} \rangle \Big ]}} \leq V_{0} = {\rm{deg}}(X_{P},-K_{X_{P}}),
\end{equation}
notice that $n = \dim_{\mathbbm{C}}(X_{P}) = |\Pi^{+} \backslash \langle \Theta \rangle^{+}|$. Following \cite[Theorem 24.10]{BorelHizebruch} and \cite[Example 18.13]{Timashev}, we have 
\begin{equation}
{\rm{deg}}(X_{P},-K_{X_{P}}) = (-K_{X_{P}})^{n} = \int_{X_{P}}c_{1}(X_{P})^{n} = n!\prod_{\gamma \in \Pi^{+} \backslash \langle \Theta \rangle^{+}}\frac{ \langle \delta_{P},h_{\gamma}^{\vee} \rangle}{ \langle \varrho^{+},h_{\gamma}^{\vee} \rangle}.
\end{equation}
Therefore, from the result provided in \cite[Theorem 1]{Snow} we obtain the desired inequalities.
\end{proof}

\section{Examples}\label{sec-examples}

In order to compute the numbers $\langle \delta_{P},h_{\alpha}^{\vee} \rangle$, $\alpha \in \Sigma \backslash \Theta$, which appear in the formulas of our results, it is worth recalling that $\delta_P=c_1\varpi_{\alpha_1} +\ldots + c_n\varpi_{\alpha_n}$, where $c_i>0$ and $\alpha_i \in \Sigma \backslash \Theta$ (cf. Borel-Hirzebruch \cite{Bo-Hi}). The numbers $c_i$ are called {\em Koszul numbers}, these numbers characterize the K\"ahler-Einstein metrics on flag varieties (see \cite{kimura}, \cite{AC1}, \cite{AC2}). In what follows, we present some examples which illustrate how the existence of $G$-invariant tKE metrics can be characterized using Koszul numbers. 

\subsection{Full flag variety $X_{B} = G^{\mathbbm{C}}/B$} In the setting of Theorem \ref{main-thm}, in the particular case that $P = B$ (i.e. $\Theta  = \emptyset$), it follows that $\delta_{B} = 2 \varrho^{+}$, where
\begin{equation}
\varrho^{+} = \frac{1}{2}\sum_{\alpha \in \Pi^{+}}\alpha = \sum_{\alpha \in \Sigma} \varpi_{\alpha}.
\end{equation}
Therefore, considering a $G$-invariant $(1,1)$-form $\beta \in c_{1}(L)$, for some $L \in {\rm{Pic}}(X_{B})$, from Theorem \ref{main-thm} we have that the necessary and sufficient condition over $\beta$ for the existence of a $G$-invariant K\"{a}hler metric $\omega$ on $X_{B}$, satisfying ${\rm{Ric}}(\omega) = \omega + \beta$, is given by
\begin{equation}
\int_{\mathbbm{P}_{\alpha}^{1}}\beta  < 2\pi \langle \delta_{B},h_{\alpha}^{\vee}  \rangle = 2\pi \langle 2\varrho^{+},h_{\alpha}^{\vee}  \rangle = 4\pi \langle  \sum_{\gamma \in \Sigma} \varpi_{\gamma},h_{\alpha}^{\vee} \rangle = 4 \pi, \ \ \forall \alpha \in \Sigma.
\end{equation} 
\subsection{Projetivization of $T\mathbbm{P}^{n+1}$}
Let us consider the flag variety $X_P= {\rm{SU}}(n+2)/{\rm{S}}({\rm{U}}(n)\times {\rm{U}}(1)\times {\rm{U}}(1))$. This space can be viewed as the projetivization of the tangent bundle of $\mathbbm{P}^{n+1}$, so let us denote $X_{P} = \mathbbm{P}(T\mathbbm{P}^{n+1})$, see \cite{hirzen}. Let $\mathfrak{g}^{\mathbbm{C}}$ be the Lie algebra $\mathfrak{sl}(n+2, \mathbbm{C})$ with Cartan subalgebra given by the diagonal traceless matrices. Recall that the roots of the Lie algebra $\mathfrak{g}^{\mathbbm{C}} = \mathfrak{sl}(n+2,\mathbbm{C})$ is given by the functionals $\alpha_{ij}=\lambda_i -\lambda_j$, where $\lambda_i(\Diag(a_1, \ldots, a_{n+2}))=a_i$, $i = 1,\ldots,n+2$. The set of simple roots $\Sigma$ is given $\alpha_{i,i+1}$, $i = 1,\ldots,n+1$. The space $\mathbbm{P}(T\mathbbm{P}^{n+1})$ is characterized by $P = P_{\Theta}$, such that $\Sigma\setminus \Theta=\{ \alpha_{n,n+1}, \alpha_{n+1,n+2} \}$. In particular, we have $\rank  H^{2}(\mathbbm{P}(T\mathbbm{P}^{n+1}),\mathbbm{Z})=2$. A straightforward computation gives in this case
$$
\delta_P=2\lambda_1 +\ldots + 2\lambda_n - (n-1)\lambda_{n+1} - (n+1)\lambda_{n+2}, 
$$
thus
\begin{equation}
    \langle \delta_P,h_{\alpha_{n,n+1}}^{\vee}  \rangle = n+1, \qquad   \langle \delta_P,h_{\alpha_{n+1,n+2}}^{\vee}  \rangle = 2.
\end{equation}
Therefore, considering a ${\rm{SU}}(n+2)$-invariant $(1,1)$-form $\beta \in c_{1}(L)$, for some $L \in {\rm{Pic}}(\mathbbm{P}(T\mathbbm{P}^{n+1}))$, then there exists a ${\rm{SU}}(n+2)$-invariant K\"{a}hler metric $\omega$ on $\mathbbm{P}(T\mathbbm{P}^{n+1})$, such that ${\rm{Ric}}(\omega) = \omega + \beta$ if, and only if, 
\begin{equation}
\int_{\mathbbm{P}_{\alpha_{n,n+1}}^{1}}\beta  < 2\pi \langle \delta_{P},h_{\alpha_{n,n+1}}^{\vee} \rangle = 2\pi (n+1) \ \ \text{and} \ \ \int_{\mathbbm{P}_{\alpha_{n+1,n+2}}^{1}}\beta  < 2\pi \langle \delta_{P},h_{\alpha_{n+1,n+2}}^{\vee} \rangle = 4\pi. 
\end{equation}
\subsection{Flag varieties with Picard number two and few isotropy summands} According to Proposition \ref{C8S8.2Sub8.2.3P8.2.6}, flag varieties with Picard number 2 are parameterized by $\Theta \subset \Sigma$, such that $\Sigma\setminus\Theta=\{\alpha, \gamma \}$. In what follows, we will characterize tKE metrics for some families of such spaces parameterized in terms of the number of components of the isotropy representation as follows (see \cite{kimura}, \cite{AC1}, \cite{AC2}): let $\mu=n_1\alpha_1 + \ldots + n_m\alpha_m$ be the maximal root of $\Pi$ and recall the {\em height} of a simple root $\alpha_i$ with respect to $\mu$ is the positive number $\htt (\alpha_i)=n_i$. In this setting, we have the following classification: 
\begin{enumerate}
\item {\bf Type I:} flags with three isotropy summands: $\Sigma\setminus\Theta=\{\alpha, \gamma : \, \htt(\alpha)=\htt(\gamma)=1 \}$ 
\item {\bf Type II:} flags with four isotropy summands: $\Sigma\setminus\Theta=\{\alpha, \gamma : \, \htt(\alpha)= 1, \, \htt(\gamma)=2 \}$ 
\item {\bf Type III:} flags with five isotropy summands: $\Sigma\setminus\Theta=\{\alpha, \gamma : \, \htt(\alpha)= 1, \, \htt(\gamma)=2  \mbox{ \em{or} } \htt(\alpha)= 2, \, \htt(\gamma)=2 \}$ 
\end{enumerate}

      
 
    
   

The next proposition summarize the classification of flag varieties of type I, II, III. 

\begin{proposition}[\cite{AC1},\cite{AC2},\cite{kimura}]\label{flag-type} The flag varieties of type I, II and III (up to equivalence) and the corresponding Koszul numbers are listed in the Table \ref{tab:my_label}. The black dots on the Dynkin diagram represents the simple roots on $\Sigma\setminus\Theta$ and the labels on the Dynkin diagram denotes the {\em height} of the simple root with respect to the maximal root $\mu$.

\begin{table}[h!]

    \centering
   
    \begin{tabular}{c|c|c|c}
    
        Type & $X_{P} = G/ G \cap P$ & $\Sigma\setminus\Theta = \{ \alpha_1,\alpha_2\}$ &$\delta_P=\ell_1\varpi_{\alpha_1}+\ell_2\varpi_{\alpha_2}$ \\ \hline \hline
        
        \multirow{2}{*}{I} & $SO(2\ell)/U(1)\times U(\ell-1)$ & \multirow{2}{*}{\dynkin[labels={1,2,2,2,2,1,1},scale=1.4]D{ooo...oo**}} & \multirow{2}{*}{$\ell \, \varpi_{\alpha_1} + \ell \, \varpi_{\alpha_2}$}   \\
        &$(\ell \geq 4)$& &  \\ & & &  \\ \hline

        \multirow{2}{*}{I} &  $SO(2\ell)/U(1)\times U(\ell-1)$ & \multirow{2}{*}{\dynkin[labels={1,2,2,2,2,1,1},scale=1.4]D{*oo...oo*o}} & \multirow{2}{*}{$\ell \, \varpi_{\alpha_1} + 2(\ell-2) \, \varpi_{\alpha_2}$}   \\ 
        & $(\ell \geq 4)$ & &  \\ & & & \\ \hline
        
        \multirow{2}{*}{I} & $SO(2\ell)/U(1)\times U(\ell-1)$ & \multirow{2}{*}{\dynkin[labels={1,2,2,2,2,1,1},scale=1.4]D{*oo...ooo*}} & \multirow{2}{*}{$\ell \, \varpi_{\alpha_1} + 2(\ell-2) \, \varpi_{\alpha_2}$}   \\ 
        &$(\ell \geq 4)$ & &  \\  & & & \\ \hline
         
          \multirow{2}{*}{I} & $SU(\ell +n+m)/SU(U(\ell)\times U(m) \times U(n))$ & \multirow{2}{*}{\dynkin[labels={1,1,1,1,1,1,1},scale=1.4]A{o*o...oo*o}} & \multirow{2}{*}{$(\ell+m) \, \varpi_{\alpha_1} + (m+n) \, \varpi_{\alpha_2}$ } \\ 
          &$(\ell, m, n \geq 1 )$ &  &  \\ \hline 
          
          I & $E_6/U(1)\times U(1) \times \Spin(8)$ & \dynkin[labels={1,2,2,3,2,1},scale=1.4]E{*oooo*} & $4 \, \varpi_{\alpha_1} + 4 \, \varpi_{\alpha_2}$  \\ \hline 
          
           II & $SO(2\ell+1)/SO(2\ell-3) \times U(1) \times U(1)$ & \dynkin[labels={1,2,2,2,2,2},scale=1.4]B{**o...ooo} & $ 2\varpi_{\alpha_1} + (2\ell -3) \, \varpi_{\alpha_2}$  \\ \hline 
           
            \multirow{2}{*}{II} & $Sp(\ell)/U(p)\times U(\ell -p)$ &  \multirow{2}{*}{\dynkin[labels={2,2,2,2,2,1},scale=1.4]C{ooo...*...o*}} & \multirow{2}{*}{$ \ell \, \varpi_{\alpha_1} + (\ell -p+1) \, \varpi_{\alpha_2}$}   \\ 
            & $(1\leq p \leq \ell-1)$ & &  \\ \hline 
            
             II & $SO(2\ell)/SO(2(\ell-2))\times U(1) \times U(1)$ &   \dynkin[labels={1,2,2,2,1,1},scale=1.4]D{**o...ooo} & $ 2\ell \, \varpi_{\alpha_1} + 2(\ell -2) \, \varpi_{\alpha_2}$  \\ \hline 
             
              \multirow{2}{*}{II} & $SO(2\ell)/U(p) \times U(\ell-p)$ &    \multirow{2}{*}{\dynkin[labels={1,2,2,2,1,1},scale=1.4]D{*o...*...ooo}} &  \multirow{2}{*}{$ \ell \, \varpi_{\alpha_1} + 2(\ell -p-1) \, \varpi_{\alpha_2}$}  \\
               & $(2\leq p \leq \ell-2)$ & &  \\ & & & \\ \hline 
             
             II & $ E_6/SU(5)\times U(1) \times U(1)$  &   \dynkin[labels={1,2,2,3,2,1},scale=1.4]E{*o*ooo} & $ 2 \, \varpi_{\alpha_1} + 8 \, \varpi_{\alpha_2}$  \\ \hline 
             
             II & $ E_7/SO(10)\times U(1) \times U(1)$ &   \dynkin[labels={1,2,2,3,4,3,2},scale=1.4]E{*o*oooo}& $ 2 \, \varpi_{\alpha_1} + 12 \, \varpi_{\alpha_2}$  \\ \hline 
             
              \multirow{2}{*}{III} & $ SO(2\ell +1)/U(1)\times U(p) \times SO(2(\ell-p-1)+1)$ &   \multirow{2}{*}{\dynkin[labels={1,2,2,2,2,2},scale=1.4]B{*o...*...ooo}}& \multirow{2}{*}{$ (p+1) \, \varpi_{\alpha_1} + (2\ell-p-2) \, \varpi_{\alpha_2}$ }  \\ 
              & $(\ell \geq 5, 3\geq p \geq \ell-3)$ & & \\ \hline 
    \end{tabular}
    \caption{}
    \label{tab:my_label}
\end{table}
\end{proposition}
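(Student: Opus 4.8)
\emph{Structure of the argument.} The proposition bundles together a classification (columns 1--3 of Table~\ref{tab:my_label}) and an explicit computation (column 4), so I would organize the proof into two essentially independent parts: (a) produce the list of spaces with three, four, or five isotropy summands, and (b) evaluate the Koszul numbers on each. Throughout I would use the description of Picard rank from Proposition~\ref{C8S8.2Sub8.2.3P8.2.6}, which says that $\rk H^2(X_P,\mathbbm{Z}) = 2$ precisely when $\Sigma\setminus\Theta = \{\alpha_1,\alpha_2\}$; thus the classifying datum is a connected Dynkin diagram together with a choice of two of its nodes (the black dots), considered up to diagram automorphism.

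\emph{Part (a): the classification.} The key tool is the isotropy decomposition. Identifying $T^{1,0}_{x_0}X_P \cong \mathfrak{m} = \sum_{\beta\in\Pi^+\setminus\langle\Theta\rangle^+}\mathfrak{g}_{-\beta}$ as in Section~\ref{generalities}, write each $\beta$ in the simple root basis as $\beta = \sum_{\alpha_i\in\Theta}k_i(\beta)\alpha_i + m_1(\beta)\alpha_1 + m_2(\beta)\alpha_2$, so that $(m_1(\beta),m_2(\beta))\in\mathbbm{Z}_{\ge 0}^{2}\setminus\{(0,0)\}$. The subspaces $\mathfrak{m}_{(a,b)} = \bigoplus_{m_1(\beta)=a,\,m_2(\beta)=b}\mathfrak{g}_{-\beta}$ are exactly the irreducible summands of the isotropy representation of $G\cap P$ on $\mathfrak{m}$ (classical for flag manifolds, cf.\ \cite{AC1}, \cite{kimura}), so the number of isotropy summands equals the number of distinct pairs $(m_1(\beta),m_2(\beta))$ that occur. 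Since $m_i(\beta)\le \htt(\alpha_i)$, the coefficient of $\alpha_i$ in the highest root $\mu$, the three regimes $\htt(\alpha_1)=\htt(\alpha_2)=1$, $\{\htt(\alpha_1),\htt(\alpha_2)\}=\{1,2\}$, and $\htt(\alpha_1)=\htt(\alpha_2)=2$ give at most three, five, and five possible pairs respectively, and yield exactly the Types I, II, III once one discards the decomposable cases (those in which no root meets both $\alpha_1$ and $\alpha_2$, so that $X_P$ splits as a product). The classification is then completed by running through the finite list of Dynkin types $A_\ell,\dots,G_2$ and all unordered pairs of nodes of the prescribed heights, eliminating the redundancies coming from diagram automorphisms, and recording the survivors.

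\emph{Part (b): Koszul numbers.} For a surviving pair one has $\delta_P = \sum_{\beta\in\Pi^+\setminus\langle\Theta\rangle^+}\beta = 2\varrho^+ - 2\varrho_\Theta^+$, where $\varrho^+ = \tfrac12\sum_{\beta\in\Pi^+}\beta$ and $\varrho_\Theta^+ = \tfrac12\sum_{\beta\in\langle\Theta\rangle^+}\beta$. Since $\langle 2\varrho^+,h_{\alpha_i}^\vee\rangle = 2$, the Koszul numbers are recovered by
\begin{equation*}
\ell_i = \langle\delta_P, h_{\alpha_i}^\vee\rangle = 2 - 2\langle\varrho_\Theta^+, h_{\alpha_i}^\vee\rangle,\qquad i = 1,2,
\end{equation*}
and writing $\varrho_\Theta^+$ through the simple coroots of the reductive factor $L_\Theta$ reduces the computation to a diagram-by-diagram count of which positive roots of $\Pi$ carry a nonzero $\alpha_i$-coefficient, using the standard expansions of $\varrho^+$ (Borel--Hirzebruch \cite{Bo-Hi}). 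I would carry this out in full for one classical family (e.g.\ $D_\ell$ with the two spin nodes, giving $\delta_P = \ell\,\varpi_{\alpha_1}+\ell\,\varpi_{\alpha_2}$) and one exceptional case ($E_6$), and indicate that the remaining lines are entirely analogous.

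\emph{Main obstacle.} The conceptual content above is light; the genuine labor is the exhaustive combinatorics of Part (a): checking type by type that no admissible configuration of two nodes has been overlooked, that the number of distinct pairs $(m_1,m_2)$ is really $3$, $4$, or $5$ as claimed (distinguishing Type II from Type III in the mixed-height case hinges precisely on whether a root realizing $(1,2)$ exists, which must be verified on the diagram), and that the equivalences remove exactly the right duplicates so that the list is both complete and irredundant. For this bookkeeping I would lean on the enumerations already performed in \cite{kimura}, \cite{AC1}, \cite{AC2}, presenting the argument above as the structural skeleton that organizes their tables.
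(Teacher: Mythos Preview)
Your sketch is a sound outline of how one would actually establish the classification and compute the Koszul numbers, but there is nothing in the paper to compare it against: the proposition is stated as a quotation from \cite{AC1}, \cite{AC2}, \cite{kimura} and carries no proof in the paper itself. So you have supplied considerably more than the authors do, and what you supply is along the same lines as the cited sources (isotropy summands indexed by the $\Sigma\setminus\Theta$-support of positive roots, followed by a case-by-case count and the computation of $\delta_P$).

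One small wobble worth tightening: your sentence ``the three regimes $\htt(\alpha_1)=\htt(\alpha_2)=1$, $\{\htt(\alpha_1),\htt(\alpha_2)\}=\{1,2\}$, and $\htt(\alpha_1)=\htt(\alpha_2)=2$ \dots\ yield exactly the Types I, II, III'' is not quite right as a correspondence. The paper's own Type~III description allows \emph{either} $\{1,2\}$ or $\{2,2\}$ heights, and indeed the sole Type~III entry in the table (the $B_\ell$ family) has heights $(1,2)$; conversely, several $\{1,2\}$ configurations land in Type~II. You do flag this in your ``Main obstacle'' paragraph (the existence or not of a root with support pattern $(1,2)$ is what separates four summands from five), but the earlier sentence should be reworded so it does not read as a bijection between height patterns and Types. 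Beyond that bookkeeping caveat, your plan is correct and matches what the references carry out.
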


Applying Theorem \ref{main-thm} joint with Proposition \ref{flag-type}, one can characterize and classify tKE metrics on these families of flag varieties as follows.

\begin{corollary}
Let us consider a flag variety $X_{P}$ of type I, II, III, and $\delta_P=\ell_1\varpi_{\alpha_1}+\ell_2\varpi_{\alpha_2}$ as listed on Table \ref{tab:my_label}. Let $L \in {\rm{Pic}}(X_{P})$ and let $\beta \in c_{1}(L)$ be a $G$-invariant $(1,1)$-form, then there exist a (unique) $G$-invariant K\"{a}hler metric $\omega$ on $X_{P}$, such that ${\rm{Ric}}(\omega) = \omega + \beta$ if, and only if, 
\begin{equation}
n_1:=\int_{\mathbbm{P}_{\alpha_{1}}^{1}}\beta  <  2 \pi \, \ell_1, \ \ \ {\text{and}} \ \ \ n_2:= \int_{\mathbbm{P}_{\alpha_{2}}^{1}}\beta  < 2\pi \ell_2.
\end{equation}
In this case, the K\"ahler metric is given by $\omega= (2\pi \ell_{1} - n_1) \Omega_{\alpha_1} + (2\pi\ell_{2} - n_2) \Omega_{\alpha_2}$, where the forms $\Omega_{\alpha_i}, i=1,2$, are the generators of $H^{2}(X_{P},\mathbbm{Z})$ given in Eq. (\ref{picardeq}). 
\end{corollary}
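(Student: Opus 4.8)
The statement is an immediate specialization of Theorem \ref{main-thm} to the flag varieties listed in Table \ref{tab:my_label}, together with the explicit expression for the K\"ahler metric $\omega$ obtained in its proof. The plan is to invoke Theorem \ref{main-thm} directly, observing that for these spaces $\Sigma \setminus \Theta = \{\alpha_1,\alpha_2\}$ has exactly two elements, so the finitely many inequalities $\int_{\mathbbm{P}_\alpha^1}\beta < 2\pi\langle \delta_P, h_\alpha^\vee\rangle$ ($\alpha \in \Sigma\setminus\Theta$) reduce to the two displayed conditions once one reads off $\langle\delta_P,h_{\alpha_i}^\vee\rangle = \ell_i$ from the decomposition $\delta_P = \ell_1\varpi_{\alpha_1} + \ell_2\varpi_{\alpha_2}$ and the defining property $\varpi_\alpha(h_\beta^\vee) = \delta_{\alpha\beta}$.

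\textbf{Key steps.} First I would recall from Proposition \ref{C8S8.2Sub8.2.3P8.2.6} that $L = \mathscr{O}_{\alpha_1}(\ell_1') \otimes \mathscr{O}_{\alpha_2}(\ell_2')$ for unique integers $\ell_i'$, and that the intersection numbers $\langle c_1(\mathscr{O}_{\alpha_i}(1)), [\mathbbm{P}_{\alpha_j}^1]\rangle = \delta_{ij}$ identify $\ell_i' = \int_{\mathbbm{P}_{\alpha_i}^1}\beta = n_i$; this is exactly the $n_i$ in the statement. Second, I would compute $\langle\delta_P, h_{\alpha_i}^\vee\rangle$: writing $\delta_P = \ell_1\varpi_{\alpha_1} + \ell_2\varpi_{\alpha_2}$ as tabulated and pairing with $h_{\alpha_i}^\vee$ gives $\langle\delta_P,h_{\alpha_i}^\vee\rangle = \ell_i$ since $\varpi_{\alpha_j}(h_{\alpha_i}^\vee) = \delta_{ij}$ (here one uses that the coefficients of $\delta_P$ in the fundamental-weight basis vanish on $\Theta$, as noted in the remark preceding the examples, so only the two terms indexed by $\Sigma\setminus\Theta$ survive). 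Third, I would apply Theorem \ref{main-thm}: the existence (and uniqueness, from $G$-invariance) of $\omega$ with $\mathrm{Ric}(\omega) = \omega + \beta$ is equivalent to $n_i < 2\pi\ell_i$ for $i = 1,2$. Finally, the explicit formula for $\omega$ is read off from the construction in the proof of Theorem \ref{main-thm}, where one sets $\omega := \sum_{\alpha\in\Sigma\setminus\Theta}\big(2\pi\langle\delta_P,h_\alpha^\vee\rangle - \int_{\mathbbm{P}_\alpha^1}\beta\big)\Omega_\alpha$; for our two-generator case this is precisely $\omega = (2\pi\ell_1 - n_1)\Omega_{\alpha_1} + (2\pi\ell_2 - n_2)\Omega_{\alpha_2}$, and the coefficients are positive exactly under the stated inequalities, confirming that $\omega$ is a genuine $G$-invariant K\"ahler form.

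\textbf{Main obstacle.} There is essentially no analytic obstacle here, since all of the hard work is contained in Theorem \ref{main-thm}. The only point requiring care is bookkeeping: one must correctly match the labelling of the two simple roots $\alpha_1,\alpha_2$ in Table \ref{tab:my_label} with the corresponding generators $\Omega_{\alpha_i}$ of $H^2(X_P,\mathbbm{Z})$ and verify, case by case or uniformly via the Koszul-number description $\delta_P = c_1\varpi_{\alpha_1} + \cdots + c_n\varpi_{\alpha_n}$ with $c_i > 0$, that the tabulated values $\ell_1,\ell_2$ are indeed $\langle\delta_P,h_{\alpha_i}^\vee\rangle$. Since Proposition \ref{flag-type} already records these Koszul numbers, this reduces to a direct substitution, and the corollary follows.
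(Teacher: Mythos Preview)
Your proposal is correct and follows exactly the approach the paper intends: the corollary is stated immediately after Proposition~\ref{flag-type} with the remark that it is obtained by ``applying Theorem~\ref{main-thm} joint with Proposition~\ref{flag-type},'' and no separate proof is given. Your write-up simply makes explicit the specialization $\Sigma\setminus\Theta=\{\alpha_1,\alpha_2\}$, the identification $\langle\delta_P,h_{\alpha_i}^\vee\rangle=\ell_i$, and the formula for $\omega$ from the proof of Theorem~\ref{main-thm}, which is precisely what the paper leaves implicit.
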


\

{\bf Acknowledgment:} E. M. Correa is supported by FAEPEX/Unicamp grant 2528/22. L. Grama is partially supported by S\~ao Paulo Research Foundation FAPESP grants 2018/13481-0, 2021/04003-0, 2021/04065-6  and CNPq grant no. 305036/2019-0.

\bibliographystyle{alpha}
\bibliography{main}

\end{document}